\documentclass{amsart}
\usepackage[utf8]{inputenc}
\usepackage[hidelinks]{hyperref}
\usepackage{aliascnt}
\usepackage{setspace}
\usepackage{amsmath}
\usepackage{amsthm}
\usepackage{amssymb}
\usepackage{mathtools}
\usepackage{amsfonts}
\usepackage{enumitem}
\usepackage{wasysym}
\usepackage{graphicx}
\usepackage{mathrsfs}
\usepackage[nobysame]{amsrefs}
\usepackage{todonotes}
\usepackage{microtype}

\newtheorem{theorem}{Theorem}
\numberwithin{theorem}{section}
\newaliascnt{lemma}{theorem}
\newtheorem{lemma}[lemma]{Lemma}
\aliascntresetthe{lemma}

\newaliascnt{proposition}{theorem}
\newtheorem{proposition}[proposition]{Proposition}
\aliascntresetthe{proposition}

\newaliascnt{corollary}{theorem}
\newtheorem{corollary}[corollary]{Corollary}
\aliascntresetthe{corollary}

\newaliascnt{conjecture}{theorem}

\aliascntresetthe{conjecture}

\newaliascnt{definition}{theorem}
\newtheorem{definition}[definition]{Definition}
\aliascntresetthe{definition}

\newaliascnt{example}{theorem}

\aliascntresetthe{example}

\newaliascnt{remark}{theorem}
\theoremstyle{remark}

\aliascntresetthe{remark}

\numberwithin{figure}{section}

\newcommand{\NN}{\mathbb{N}}
\newcommand{\ZZ}{\mathbb{Z}}
\newcommand{\QQ}{\mathbb{Q}}
\newcommand{\RR}{\mathbb{R}}
\newcommand{\CC}{\mathbb{C}}
\newcommand{\HH}{\mathbb{H}}

\DeclareMathOperator{\GL}{\operatorname{GL}}
\DeclareMathOperator{\SL}{\operatorname{SL}}
\DeclareMathOperator{\PSL}{\operatorname{PSL}}
\DeclareMathOperator{\SO}{\operatorname{SO}}

\DeclareMathOperator{\Aff}{\operatorname{Aff}}
\DeclareMathOperator{\lcm}{\operatorname{lcm}}

\begin{document}

\title{Computing Periodic Points on Veech Surfaces}

\author[Chowdhury]{Zawad Chowdhury$^1$}
\address{$^1$Massachusetts Institute of Technology}
\email{zawadx@mit.edu}

\author[Everett]{Samuel Everett$^2$}
\address{$^2$University of Chicago}
\email{same@uchicago.edu}

\author[Freedman]{Sam Freedman$^*$}
\address{$^*$Brown University}
\email{sam\_freedman@brown.edu}
\thanks{$^*$ Corresponding author}

\author[Lee]{Destine Lee$^3$}
\address{$^3$Columbia University}
\email{dll2141@columbia.edu}

\date{\today}
\keywords{Veech surface, periodic points, Teichm\"uller dynamics}

\begin{abstract}
A non-square-tiled Veech surface has finitely many periodic points, i.e., points with finite orbit under the affine automorphism group.
We present an algorithm that inputs a non-square-tiled Veech surface and outputs its set of periodic points.
We apply our algorithm to Prym eigenforms in the minimal stratum in genus 3, proving that in low discriminant these surfaces do not have periodic points, except for the fixed points of the Prym involution.
\end{abstract}

\maketitle
\section{Introduction}
A \emph{translation surface} $(X, \omega)$ is a Riemann surface $X$ with a nonzero holomorphic 1-form $\omega$. 
This paper concerns the dynamics of the group $\Aff^+(X, \omega)$ of orientation-preserving affine automorphisms of $(X, \omega)$, specifically the \emph{periodic points} of the $\Aff^+(X, \omega)$-action.
For simplicity of the definitions, we restrict our attention to \emph{Veech surfaces}: translation surfaces where the image of the derivative map $D\colon \Aff^+(X, \omega) \to \SL(2, \RR)$ is a lattice.
Points on a Veech surface $(X, \omega)$ that have a finite $\Aff^+(X, \omega)$-orbit are then called \emph{periodic}.
This definition coincides with the one given by M\"oller in \cite{Moller2006}, and examples include the zeros of the 1-form $\omega$.

Much work has been devoted to the problem of determining periodic points on Veech surfaces, in part due to their role in finite-blocking and illumination problems and providing heuristic evidence for higher-rank orbit closures (see \cite{apisa2020periodic} and the references therein).
Gutkin-Hubert-Schmidt \cite{GutkinHubertSchmidt2003} showed that non-square-tiled Veech surfaces only have finitely many periodic points (this was also established by Möller \cite{Moller2006} and Lanneau-Nguyen-Wright \cite{lanneau_finiteness_2017}, and has been recently recovered as a special case of work of Eskin-Filip-Wright \cite{efw}), and Shinomiya \cite{Shinomiya} established explicit upper bounds on their number.
Periodic points have been explicitly classified in some cases: Möller addressed the case of genus 2 in \cite{Moller2006}; Apisa-Saavedra-Zhang addressed the regular $n$-gon and double regular $n$-gon for $n = 5$, $n \ge 7$ in \cite{apisa2020periodic}; B. Wright addressed the Bouw-Möller examples in \cite{wright2021periodic}.

Nonetheless, the problem of determining the periodic points on an arbitrary Veech surface remains open.
This paper describes a general algorithm that computes the periodic points on a Veech surface:

\begin{theorem}\label{thm:algo}
There is an algorithm that, given a non-square-tiled Veech surface as input, outputs the periodic points on that translation surface.
\end{theorem}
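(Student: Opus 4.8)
The plan is to produce a finite, explicitly computable set of candidate points that provably contains every periodic point, and then to decide periodicity within this finite set by a direct invariance check. First I would locate two transverse completely periodic directions: since $(X,\omega)$ is Veech, the Veech dichotomy guarantees that every saddle-connection direction is completely periodic, so enumerating saddle connections (equivalently, walking a fundamental domain for the lattice $\Gamma = D(\Aff^+(X,\omega))$) yields two non-parallel such directions, which after applying an element of $\SL(2,\RR)$ I may take to be horizontal and vertical. In each direction I would compute the cylinder decomposition together with the circumferences $c_i$ and heights $h_i$ of its cylinders; on a Veech surface the moduli in a periodic direction are commensurable, so the associated multitwist is realized by a parabolic $\phi_h, \phi_v \in \Aff^+(X,\omega)$, and I can extract the integers $m_i$ recording how many Dehn twists $\phi_h$ performs in each cylinder. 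All of this data lives in the trace field $K$, a number field in which I intend to carry out exact arithmetic.

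The key step is the following finiteness mechanism. Because periodicity is $\Aff^+(X,\omega)$-invariant, the orbit of any periodic point is contained in the periodic set, whose cardinality is bounded above by an explicit constant $B$ obtained from Shinomiya's estimates \cite{Shinomiya}. Hence $\phi_h$ and $\phi_v$ each permute the (at most $B$-element) orbit of a periodic point $P$, so $\phi_h^{B!}$ and $\phi_v^{B!}$ both fix $P$. Now $\phi_h^{B!}$ acts on the $i$-th horizontal cylinder as $B!\,m_i$ Dehn twists, so its fixed locus is the finite union of horizontal core circles at the heights $y \in \tfrac{h_i}{B!\,m_i}\ZZ$; symmetrically, $\mathrm{Fix}(\phi_v^{B!})$ is a finite union of vertical circles. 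Every periodic point therefore lies in the transverse intersection $\mathrm{Fix}(\phi_h^{B!}) \cap \mathrm{Fix}(\phi_v^{B!})$, a finite and explicitly computable set $\mathcal{C}$ of candidates (including, as expected, the zeros of $\omega$).

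Finally I would decide which candidates are genuinely periodic. Computing generators of $\Aff^+(X,\omega)$ — generators of the lattice $\Gamma$ from the fundamental-domain computation, together with the finite kernel of $D$ — I would evaluate their action on each point of $\mathcal{C}$ exactly over $K$ and iteratively discard any candidate that some generator moves outside $\mathcal{C}$. The resulting maximal $\Aff^+(X,\omega)$-invariant subset $\mathcal{C}^\ast \subseteq \mathcal{C}$ consists precisely of the points with finite orbit, i.e.\ the periodic points, which the algorithm outputs.

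I expect the main obstacle to be carrying out every geometric construction \emph{exactly and certifiably} over the number field $K$: computing the two parabolics and their twist data, the fixed circles and their pairwise intersections, the Veech-group generators, and the action of affine maps on points, all while tracking points by exact algebraic coordinates. A secondary difficulty is extracting a genuinely effective value of the orbit bound $B$ so that $\mathcal{C}$ is provably complete; geometric finiteness itself is already known \cite{GutkinHubertSchmidt2003, Moller2006, lanneau_finiteness_2017}, but it is its \emph{quantitative}, computable form that powers the reduction to a finite search.
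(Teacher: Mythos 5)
Your proposal is correct, but it reaches the finite candidate set by a genuinely different mechanism than the paper. The paper applies the Rational Height Lemma (\autoref{lem:rational-height}) in two transverse cylinder directions, adds a third constraint from the image of a point under the horizontal multitwist, reduces these three rationality constraints over the trace field to a single linear equation (\autoref{lem:constraint-reduction} --- this is exactly where non-square-tiledness enters, via the irrationality of $c_H c_V / h_H h_V$ from Gutkin--Judge and Hubert--Lanneau), and then cuts the resulting segments down to points with a carefully chosen hyperbolic element (\autoref{lem:hyperbolic-without-bad-eigen} and \autoref{lem:lines-to-points}). You bypass all of this: from Shinomiya's a priori cardinality bound $B$ you deduce that $\phi_h^{B!}$ and $\phi_v^{B!}$ fix every periodic point, which pins the periodic points to the finite transverse intersection of fixed circles. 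Your permutation argument is sound (the orbit of a periodic point consists of periodic points, hence has size at most $B$, and a permutation of at most $B$ objects has order dividing $B!$), your fixed-locus computation $y \in \tfrac{h_i}{B!\,m_i}\ZZ$ in each cylinder is right, and your final pruning --- the maximal subset of $\mathcal{C}$ invariant under generators of $\Aff^+(X,\omega)$ --- is equivalent to the paper's graph-component procedure in \autoref{lem:reduce-candidate-points} (you are in fact slightly more careful than the paper in also including the finite kernel of $D$). As for the trade-offs: your route is shorter and yields as a byproduct an \emph{effective} rational-height statement with controlled denominators, but its correctness rests entirely on the effectivity of $B$; Shinomiya's bounds are stated in terms of Veech-group data (which you can compute via a fundamental-domain algorithm such as \cite{bowman}, needed anyway for your last step), so the approach does go through, but it imports a quantitative finiteness theorem as a black box, whereas the paper's argument is self-contained and recovers finiteness (\autoref{cor:finiteness}) as a corollary rather than consuming it as a hypothesis. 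Practically the difference is stark: your candidate set has size governed by $(B!)^2$, which is astronomically large for realistic values of $B$ (replacing $B!$ by $\lcm(1,\dots,B)$ helps only marginally), while the paper's method produced, e.g., only $857$ candidates for the discriminant-$17$ Prym eigenform and was actually implemented. In short: a legitimately different and valid proof of \autoref{thm:algo}, but not one that could power the computations in Section 5.
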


\pagebreak
As a corollary to our proof of the correctness of this algorithm, we obtain:
\begin{corollary}\label{cor:finiteness}
A non-square-tiled Veech surface has finitely many periodic points.
\end{corollary}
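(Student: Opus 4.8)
The plan is to obtain \autoref{cor:finiteness} as a formal consequence of the correctness and termination of the algorithm of \autoref{thm:algo}. The essential observation is that a procedure which provably outputs \emph{all} periodic points must, upon halting, have produced a finite list; so once one knows the algorithm terminates and that its output equals the periodic set, finiteness is immediate. Concretely, I would isolate from the correctness proof the finite, computable set $S$ that the algorithm enumerates and certifies to contain every periodic point, and then observe that $\mathrm{Per}(X,\omega) \subseteq S$ forces $\mathrm{Per}(X,\omega)$ to be finite.

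The real work, and the only place where the hypothesis that $(X,\omega)$ is not square-tiled can enter, is in showing that such a finite $S$ exists — equivalently, that the search halts. I would stress why no purely elementary argument suffices: a periodic point $p$ is fixed by a finite-index subgroup $H \le \Aff^+(X,\omega)$, hence by powers of two independent pseudo-Anosov elements $g_1, g_2 \in H$, so that $p$ lies in the intersection of the two periodic loci. For a square-tiled surface, however, this intersection is exactly the set of torsion (rational) points and is infinite, and indeed such surfaces have infinitely many periodic points. Thus finiteness cannot follow from the lattice structure of the Veech group alone; it must exploit non-arithmeticity in an essential way.

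Accordingly, I would route the deduction through whatever structural confinement the termination proof establishes, namely that on a non-square-tiled Veech surface the periodic points are pinned to a proper, arithmetically rigid sublocus (one expects this to arise from the trace field and the associated real-multiplication or torsion-section structure, so that periodic points behave like torsion sections of the family over the Teichm\"uller curve and take only finitely many values). Granting this confinement, the candidate set $S$ is finite and the corollary follows. The main obstacle is therefore not the passage from the algorithm to the corollary, which is formal, but extracting from the termination argument the clean statement that the periodic locus is confined to a finite, explicitly computable set; checking that precisely this confinement is what excludes the arithmetic (infinite) case is the crux.
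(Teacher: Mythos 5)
Your deduction is correct and is essentially the paper's own: the proof of \autoref{thm:algo} already manufactures a finite candidate set containing every periodic point --- the finite collection of segments from \autoref{lem:region-candidate-segments} (this is where the non-square-tiled hypothesis enters), intersected with its image under a power of the hyperbolic element of \autoref{lem:hyperbolic-without-bad-eigen} via \autoref{lem:lines-to-points} --- and the corollary is exactly the observation that $\mathrm{Per}(X,\omega)$ sits inside this finite set. One small caveat: the confinement mechanism you conjecture (real multiplication, torsion sections over the Teichm\"uller curve) is M\"oller's route, not this paper's --- here the segments come from the Rational Height Lemma combined with the Constraint Reduction Lemma, with the needed irrationality supplied by the trace field being a nontrivial extension of $\QQ$ (Gutkin--Judge, plus Hubert's Claim 2.1) --- but since your argument explicitly defers to the proof of \autoref{thm:algo} rather than relying on that guess, the deduction stands.
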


The algorithm works as follows.
First, we use the fact that periodic points on Veech surfaces must lie at rational heights in cylinders containing them (see \autoref{lem:rational-height} for a precise statement) to generate finitely many constraints that any periodic point must satisfy.
Next, in \autoref{lem:constraint-reduction} we combine the constraints coming from two cylinder directions to produce a finite collection of line segments containing all of the periodic points. 
Applying a well-chosen element of $\Aff^+(X, \omega)$ to these segments (see \autoref{lem:hyperbolic-without-bad-eigen}) yields a new collection of line segments, also containing all of the periodic points, that intersects the first collection of lines in a finite set of points.
Finally, we use \autoref{lem:reduce-candidate-points} to determine which elements of this finite set have finite orbit under all of $\Aff^+(X, \omega)$, i.e. which points are periodic.

We implemented this algorithm in SageMath \cite{sagemath}, subject to the simplifying assumption that the input translation surface admits a Delaunay triangulation that is both horizontally and vertically periodic.
A novelty of our implementation is a subroutine that uses Delaunay triangulations to efficiently decompose a Veech surface $(X, \omega)$ into cylinders in the direction of any saddle connection. 
In contrast to existing algorithms for performing this computation, our approach does not require searching for saddle connections with a given slope.
Rather, we iteratively contract a given direction until it is sufficiently short, forcing the direction to appear in every triangle of the contracted surface's Delaunay triangulation (see   
\autoref{cylref} and \autoref{sec:cyl-find-alg} for more details).
This implementation has been made publicly available \cite{code}.

Using this implementation, we investigated periodic points on Weierstrass Prym eigenforms in the minimal stratum in genus 3 discovered by McMullen in \cite{McMullenPrym}. (See Section 5 for more information.) These are an infinite family of Veech surfaces whose $\GL^+(2, \RR)$-orbits are specified by an integer discriminant congruent to 0, 1 or 4 modulo 8 (as well as a $\ZZ/2\ZZ$-valued invariant). 
We found:
\begin{theorem}\label{thm:prym}
For Weierstrass Prym eigenforms in genus 3 of nonsquare discriminant $D$ at most 104, the periodic points are the fixed points of the Prym involution.
\end{theorem}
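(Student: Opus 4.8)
The plan is to apply the general algorithm guaranteed by \autoref{thm:algo} to each Weierstrass Prym eigenform in the relevant family, and verify that the only periodic points it returns are the fixed points of the Prym involution. Since \autoref{thm:prym} is a finite computational claim---it ranges over nonsquare discriminants $D \le 104$ with $D \equiv 0, 1, 4 \pmod 8$, together with the finitely many $\GL^+(2,\RR)$-orbits for each such $D$---the logical structure of the proof is simply to enumerate these finitely many surfaces and run the algorithm on each. The substance of the argument therefore lies in (i) producing explicit translation surface representatives for each orbit, (ii) confirming that these representatives satisfy the simplifying hypotheses under which the implementation is valid (namely, admitting a Delaunay triangulation that is both horizontally and vertically periodic), and (iii) checking that the candidate periodic points surviving the algorithm are exactly the Prym-involution fixed points.

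First I would recall from McMullen's construction in \cite{McMullenPrym} how to build an explicit polygonal model for a Weierstrass Prym eigenform of given discriminant $D$, so that the input to the algorithm is a concrete translation surface with rational (or appropriately algebraic) coordinates. Because these surfaces carry a distinguished involution---the Prym involution $\tau$, an affine automorphism with derivative $-I$---I would note at the outset that the fixed points of $\tau$ are necessarily periodic: they form a finite $\tau$-invariant set, and more importantly they are fixed by the center of $\Aff^+(X,\omega)$ and thus have finite orbit. So the content is entirely about ruling out \emph{other} periodic points. I would then feed each representative into the SageMath implementation of \cite{code}, which applies \autoref{lem:rational-height}, \autoref{lem:constraint-reduction}, and \autoref{lem:hyperbolic-without-bad-eigen} to cut down to a finite candidate set, and finally \autoref{lem:reduce-candidate-points} to test each candidate for finite orbit.

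The main obstacle I anticipate is not mathematical but computational and bookkeeping in nature. Concretely, the difficulties will be: (a) enumerating the $\GL^+(2,\RR)$-orbits correctly for each $D$---one must account for the $\ZZ/2\ZZ$-valued spin invariant and ensure no orbit is missed, so that the theorem's universal quantifier over orbits is genuinely discharged; (b) exact arithmetic, since the coordinates of these eigenforms live in the real quadratic field $\QQ(\sqrt D)$ and the algorithm involves intersecting line segments and applying hyperbolic elements of the Veech group, all of which must be carried out symbolically to avoid floating-point errors that could spuriously admit or reject a candidate periodic point; and (c) verifying the Delaunay-periodicity hypothesis for each representative, or else choosing a $\GL^+(2,\RR)$-translate that satisfies it, since the implementation relies on this assumption. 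The correctness of the algorithm itself is already established by \autoref{thm:algo}, so once the inputs are correctly prepared and the arithmetic is exact, the output is guaranteed to be the true periodic-point set; the remaining work is to tabulate these outputs across all $D \le 104$ and observe uniformly that they coincide with the $\tau$-fixed points.

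One refinement worth making explicit: rather than treating each of the (possibly many) orbits in complete isolation, I would look for structural regularities in the output---for instance, that the fixed points of $\tau$ always appear and that the algorithm's candidate set is empty of other points after the \autoref{lem:reduce-candidate-points} step---so that the final verification can be stated cleanly as a single table of discriminants with a uniform conclusion, rather than as $104$-many ad hoc case checks. This also provides a sanity check on the implementation: since the Prym fixed points are periodic on theoretical grounds, their reliable appearance in every run is strong evidence that the symbolic arithmetic and the Delaunay-based cylinder decomposition are functioning correctly.
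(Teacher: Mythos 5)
Your proposal is essentially the paper's proof: a finite enumeration of orbit representatives (the paper uses the Lanneau--Nguyen type $A+$ and $A-$ polygonal models \cite{LanneauNguyen} rather than McMullen's original construction, but this is cosmetic) fed into the SageMath implementation \cite{code} with exact arithmetic over $\QQ(\sqrt{D})$, together with the standing observation that the Prym fixed points are periodic on general grounds. The only divergence is that the paper runs the computation just for $12 \le D \le 104$ and settles $D = 8$ by citation instead, since $\Omega E_8(4)$ is a single $\GL^+(2,\RR)$-orbit containing the Bouw--M\"oller surface $B(3,4)$, whose periodic points B. Wright classified \cite{wright2021periodic} --- a case split your uniform plan would only need if no $D=8$ representative satisfied the implementation's Delaunay-periodicity hypothesis.
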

We established this result for discriminants $D \ge 12$ with our implementation applied to the type $\text{A}+$ and type $\text{A}-$ polygonal presentations constructed in \cite{LanneauNguyen} and pictured in Figure \ref{prympoly}.
The remaining case of discriminant $D = 8$ follows from previous work: the corresponding Prym eigenform locus $\Omega E_8(4)$ consists of a single $\GL^+(2, \RR)$-orbit (see \cite{LanneauNguyen}) and contains the Bouw-M\"oller surface $B(3, 4)$, which B. Wright showed in  \cite{wright2021periodic} has only the fixed points of the Prym involution as periodic points.
(After this work, the third author in \cite{Free} proved that \autoref{thm:prym} holds for all nonsquare discriminants.
In fact, the proof relies on our explicit calculations here for the periodic points of Prym eigenforms with small discriminant.)

The paper is organized as follows. In \autoref{sec:background} we give background on translation surfaces, flat geometry, and Delaunay triangulations (which are used in implementing components our algorithm). In \autoref{secAlgo} we prove the correctness of our algorithm, establishing Theorem \ref{thm:algo}. In \autoref{secGen} we detail some of the subroutines used in implementing our algorithm, including the ``cylinder refinement" procedure in \autoref{cylref} mentioned above. Finally, in \autoref{secExp} we present the results of our algorithm applied to Prym eigenforms in the minimal stratum in genus 3, establishing Theorem \ref{thm:prym} which states that their only periodic points are the fixed points of their Prym involution.

\section{Background}\label{sec:background}
We start with a brief background on the theory of translation surfaces.  For an in-depth review see e.g. \cites{wrightsurv, MasurTabach, zorich}. We then state the important Rational Height Lemma (Lemma \ref{lem:rational-height}) that restricts the height of a periodic point in a cylinder. Finally, we give some background on Delaunay triangulations (following Bowman's work in \cite{bowman}), which are used in the implementation of our algorithm.

\subsection{Flat geometry of translation surfaces}
As mentioned above, a translation surface can be succinctly defined as a pair $(X, \omega)$ of a Riemann surface with a choice of nonzero holomorphic 1-form. Alternatively, a translation surface can be described as an equivalence class of a collection of polygons in $\CC$ whose parallel sides are identified by translation. Two translation surfaces are defined to be equivalent if their associated polygonal representations are cut-and-paste equivalent. The elements of the finite $\Sigma \subset X$ of zeros of $\omega$ are known as \emph{singularities} or \emph{cone points}.

A \emph{saddle connection} on a translation surface is a straight line-segment joining two singularities, such that it has no singularities on its interior. Each saddle connection, when viewed on a polygonal presentation of $(X, \omega)$, has two associated vectors $\pm v$ in $\CC$ called its \emph{direction}. Any maximal collection of pairwise disjoint saddle connections determines, a \emph{triangulation} of the translation surface, where any component of the complement of the saddle connections is isometric to the interior of a Euclidean triangle.

A \emph{cylinder} $C$ on a translation surface is the isometric image of an open right angled flat Euclidean cylinder consisting of closed geodesics in a saddle connection direction, whose boundary is a union of saddle connections.  Every cylinder has a \emph{circumference} $c_C$ and a \emph{height} $h_C$, and the ratio $m_C := h_C/c_C$ is the \emph{modulus} of the cylinder. The \emph{direction} of a cylinder is the direction of its boundary saddle connections. We say a translation surface $(X, \omega)$ is \emph{periodic} in a given direction if $(X, \omega)$ can be decomposed as a union of cylinders in that direction, along with their boundaries. 

If $g \in \SL(2, \RR)$ and $(X, \omega)$ is a translation surface given as a collection of polygons, then $g(X, \omega)$ is the translation surface obtained by acting linearly by $g$ on the polygons determining $(X, \omega)$.  Similarly, we can define $g(X, \omega)$ to be the action of taking each chart in the flat structure of $(X, \omega)$ and post-composing it with $g$.

Let $\SL(X, \omega)$ denote the stabilizer of $(X, \omega)$ under the action of $\SL(2, \RR)$.  The \emph{Veech Group} of $(X, \omega)$ is the image of $\SL(X, \omega)$ in $\PSL(2, \RR)$.  If $\SL(X, \omega)$ is a lattice, then $(X, \omega)$ is a \emph{Veech surface}.  

Since $\SL(X, \omega)$ is a discrete subgroup of $\PSL(2, \RR)$, it is a Fuchsian group. The elements of trace 2 in the subgroup are called \emph{parabolic}: they have only one eigendirection, which for a Veech surface corresponds to a cylinder direction. The elements of trace greater than 2 are called \emph{hyperbolic}: they have two eigendirections with reciprocal eigenvalues. For further discussion of Fuchsian groups, see \cite{katok2010fuchsian}.

On a Veech surface, knowing that a surface is horizontally periodic tells us that the Veech group $\SL(X, \omega)$ contains a particular parabolic element:
\begin{proposition}\label{lem:parabolic-matrix-exists}
Let $(X, \omega)$ be a horizontally periodic Veech surface, decomposed into horizontal cylinders $\{C_i\}$.
Then all moduli $m_{C_i}$ have rational ratios, and $\SL(X, \omega)$ contains the matrix $\begin{pmatrix} 1 & t \\ 0 & 1\end{pmatrix}$ where $t := \lcm(m_{C_1}^{-1}, \dots, m_{C_n}^{-1})$.
\end{proposition}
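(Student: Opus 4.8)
The plan is to realize the matrix $u_t := \begin{pmatrix} 1 & t \\ 0 & 1\end{pmatrix}$ as the derivative of a product of Dehn twists, one in each horizontal cylinder, and to extract the commensurability of the moduli from the lattice hypothesis. I would begin with the purely local computation: on a single horizontal cylinder $C_i$ of circumference $c_{C_i}$ and height $h_{C_i}$, the right Dehn twist is the affine shear $(x,y) \mapsto (x + (c_{C_i}/h_{C_i})\,y,\, y)$, which fixes both boundary circles pointwise and has derivative $\begin{pmatrix} 1 & m_{C_i}^{-1} \\ 0 & 1\end{pmatrix}$, since $m_{C_i}^{-1} = c_{C_i}/h_{C_i}$. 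More generally, performing $\ell_i$ Dehn twists in $C_i$ has derivative $u_{\ell_i m_{C_i}^{-1}}$ and still fixes $\partial C_i$ pointwise.

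The crux is to produce a single nonzero parabolic element of $\SL(X,\omega)$ fixing the horizontal direction; this is the only place the Veech (lattice) hypothesis is essential. I would invoke the fact that on a lattice surface every completely periodic direction is the fixed direction of a parabolic element of the Veech group (equivalently, the horizontal direction corresponds to a cusp of $\SL(X,\omega)$). Any such parabolic is, up to sign, the unipotent $u_{s_0}$ for some $s_0 \neq 0$. Because it preserves the horizontal saddle connections and hence the cylinder decomposition, after replacing it by a suitable power I may assume it fixes each $C_i$ setwise; its restriction to $C_i$ is then an affine automorphism of the flat cylinder with unipotent derivative $u_{s_1}$ (where $s_1 = N s_0$), which must be a power of the Dehn twist up to a rotation along the core. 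Matching the finitely many singularities on $\partial C_i$ forces the net shear to be an integer number of full twists, i.e. $s_1 = k_i m_{C_i}^{-1}$ for some $k_i \in \ZZ_{>0}$. Hence $m_{C_i}^{-1} = s_1/k_i$ for all $i$, so $m_{C_i}^{-1}/m_{C_j}^{-1} = k_j/k_i \in \QQ$, giving the first assertion that the moduli have rational ratios.

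With commensurability in hand, $t := \lcm(m_{C_1}^{-1}, \dots, m_{C_n}^{-1})$ is well defined as the least positive real that is a positive integer multiple of each $m_{C_i}^{-1}$, so $\ell_i := t\,m_{C_i}$ is a positive integer for every $i$. I would then let $\Phi$ be the map performing $\ell_i$ Dehn twists in each $C_i$. Since each $\Phi|_{C_i}$ is the identity on $\partial C_i$, these maps agree on the shared boundary saddle connections and glue to a well-defined homeomorphism of $X$; on the interior of every cylinder $\Phi$ is affine with the common derivative $u_t$. Thus $\Phi \in \Aff^+(X,\omega)$ with $D\Phi = u_t$, so $\begin{pmatrix} 1 & t \\ 0 & 1\end{pmatrix} \in \SL(X,\omega)$, as claimed.

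The main obstacle is the crux step: without the lattice hypothesis a completely periodic direction need not be parabolic (a generic surface has periodic directions with incommensurable moduli), so commensurability genuinely requires $\SL(X,\omega)$ to be a lattice. The other point needing care is the discretization argument — that an affine automorphism of a cylinder with unipotent derivative is an integer power of the Dehn twist up to rotation — which I would make precise by tracking the holonomy of a horizontal core curve and the induced permutation of boundary singularities. Everything else (the shear computation, the gluing of boundary-fixing twists, and the passage to the $\lcm$) is routine flat geometry.
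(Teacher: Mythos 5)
Your proof is correct and is essentially the argument behind the paper's proof, which is not given in the text but deferred to Lemma 9.7 of \cite{McMullenEigen}: the lattice hypothesis produces a parabolic fixing the horizontal (periodic) direction, its restriction to each cylinder discretizes to powers of Dehn twists forcing commensurable moduli, and the $\lcm$ multitwist realizes $\begin{pmatrix} 1 & t \\ 0 & 1\end{pmatrix}$. The one step to tighten is the discretization: a priori the restriction to $C_i$ only translates each boundary circle by an amount permuting its finitely many singular points, which yields $s_1 \in \QQ \cdot m_{C_i}^{-1}$ rather than $s_1 = k_i m_{C_i}^{-1}$ with $k_i \in \ZZ_{>0}$ directly; a further power (as you anticipate with your ``suitable power'' and core-holonomy remark) upgrades this to integrality, and note that rationality alone already suffices for the commensurability claim.
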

See Lemma 9.7 in \cite{McMullenEigen} for a proof.
Note that a similar result holds for any direction by first rotating the surface.

\begin{definition}[Multiplicity of a cylinder]
Let $(X, \omega)$ be a Veech surface that is horizontally periodic. Consider a horizontal cylinder $H$ in $(X, \omega)$ having modulus $m_H$. Let $t$ be the least common multiple of all of the reciprocals of moduli of horizontal cylinders in the horizontal cylinder decomposition of $(X, \omega)$, as in \autoref{lem:parabolic-matrix-exists}.
We define the \emph{multiplicity} of $H$ in $(X, \omega)$, denoted by $k_H$, to be the integer satisfying the equation $k_H / m_H = t$. 
\end{definition}

For an in-depth review of Veech surfaces, see \cite{HubertSchmidt}.

\subsection{The Rational Height Lemma}


A key property of a periodic point is that its position in any given cylinder containing it is restricted by a certain rationality constraint.
That is, suppose a point $p$ in a cylinder $C$ is at perpendicular distance $d$ from a boundary saddle connection of $C$.
If $C$ has height $h$, then we say $p$ has \emph{rational height} in $C$ if $d / h$ is rational.
The Rational Height Lemma then says that a periodic point must have rational height in every cylinder it lies in:

\begin{lemma}[Rational Height Lemma]\label{lem:rational-height}
Let $\mathcal{C}$ be an equivalence class of cylinders so that any two have a rational ratio of moduli.  If a periodic point belongs to the interior of a cylinder in $\mathcal{C}$ then it lies at rational height.
\end{lemma}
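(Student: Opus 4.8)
By \autoref{lem:parabolic-matrix-exists} (after rotating so that the cylinders in $\mathcal{C}$ are horizontal), the rational-ratio-of-moduli hypothesis guarantees that $\SL(X,\omega)$ contains a parabolic $\begin{pmatrix} 1 & t \\ 0 & 1\end{pmatrix}$ whose derivative is the identity on the horizontal direction and which, on each cylinder $C$ in the class, acts as a Dehn twist raised to the power of the multiplicity $k_C$. The key observation is that this parabolic fixes the surface, so the orbit of a periodic point $p$ under its powers must be finite; I would translate this finiteness into the desired rationality of the height.

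**The main computation happens inside a single cylinder.** Let me suppose $p$ lies in the interior of a horizontal cylinder $C$ of height $h$ and circumference $c$, at perpendicular distance $d$ from the bottom boundary. Parametrizing $C$ as $(\RR/c\ZZ) \times (0,h)$ with $p$ at vertical coordinate $d$, the parabolic $\begin{pmatrix} 1 & t \\ 0 & 1\end{pmatrix}$ restricted to $C$ shears horizontally by an amount proportional to the height: a point at height $y$ is displaced horizontally by $ty$. Thus iterating the parabolic $n$ times moves the horizontal coordinate of $p$ by $n t d \pmod{c}$ while fixing its height $d$. The orbit of $p$ under these powers is therefore the set of points $\{(x_0 + ntd \bmod c,\ d) : n \in \ZZ\}$. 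This orbit is finite precisely when $td/c$ is rational, i.e. when the horizontal displacement per step is a rational multiple of the circumference.

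**Finiteness of the orbit then forces rational height.** Since $p$ is periodic, its full $\Aff^+(X,\omega)$-orbit is finite, so in particular its orbit under the powers of the parabolic is finite; by the previous paragraph this means $td/c \in \QQ$. Because $t = \operatorname{lcm}(m_{C_1}^{-1},\dots,m_{C_n}^{-1})$ and $m_C = h/c$, the quantity $tc/h = t/m_C = k_C$ is the integer multiplicity of $C$, so $t/c = k_C/h$ and hence $td/c = k_C \cdot (d/h)$. Since $k_C$ is a nonzero integer, $td/c \in \QQ$ is equivalent to $d/h \in \QQ$, which is exactly the assertion that $p$ has rational height in $C$. Applying this to each cylinder in $\mathcal{C}$ containing $p$ gives the result.

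**The step I expect to require the most care is the reduction from "finite $\Aff^+$-orbit" to "finite orbit under the parabolic powers," together with verifying that the parabolic genuinely acts as claimed on the interior of the cylinder.** One must confirm that the affine automorphism with derivative $\begin{pmatrix} 1 & t \\ 0 & 1\end{pmatrix}$ restricts on $C$ to the multi-twist described above — this is where the integrality of $k_C$ is used — and that points in the cylinder interior are tracked correctly through the identifications along the boundary. Once the shear action is pinned down, the passage from finiteness to the rationality of $td/c$ is the elementary fact that an arithmetic progression modulo $c$ with step $td$ is finite if and only if the step is a rational multiple of $c$; the rest is the bookkeeping that converts $td/c \in \QQ$ into $d/h \in \QQ$ via the multiplicity.
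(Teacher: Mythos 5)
Your argument is correct, but it is worth saying up front that the paper contains no proof of this lemma at all: it simply cites Lemma 5.4 of Apisa and Lemma 2.3 of B.~Wright, whose arguments are carried out in the general setting of $\GL^+(2,\RR)$-orbit closures, where $\mathcal{C}$ is an equivalence class in the sense of the cylinder deformation theorem and the twist in $\mathcal{C}$ is only a \emph{path in the orbit closure}, not an automorphism of the surface. Your route instead specializes to the lattice case: on a Veech surface the direction of any cylinder is periodic with commensurable moduli (Veech dichotomy), so after rotating, \autoref{lem:parabolic-matrix-exists} --- whose proof is exactly the Thurston--Veech multitwist construction --- supplies a genuine affine automorphism $\phi$ with derivative $\begin{pmatrix} 1 & t \\ 0 & 1 \end{pmatrix}$ acting on each horizontal cylinder as $(x,y)\mapsto (x+ty,y)$, well defined across boundaries precisely because $th = k_C c$ with $k_C \in \ZZ$; finiteness of the $\langle\phi\rangle$-orbit of a periodic point then forces $td/c \in \QQ$ and hence $d/h \in \QQ$. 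This is a more elementary and self-contained argument that suffices for every use the paper makes of the lemma; what it does not buy is the statement in the generality the cited references prove it, since for a non-lattice surface the class $\mathcal{C}$ need not fill a periodic direction (the complement may contain minimal components or incommensurable cylinders), so no affine multitwist exists and one genuinely needs the cylinder deformation theorem. Two minor points: your equality chain ``$tc/h = t/m_C = k_C$'' is inverted --- the definition $k_C/m_C = t$ gives $k_C = t\,m_C = th/c$, not $tc/h$ --- but the relation you actually use, $t/c = k_C/h$ and hence $td/c = k_C\,(d/h)$, is the correct one, so this is only a slip; and the delicate step you flag (that the automorphism can be \emph{chosen} to be the multitwist, rather than an arbitrary affine map with derivative $T$, which could differ by a finite-order translation automorphism) is resolved exactly as you suspect, since periodicity gives finiteness of the orbit under \emph{every} element of $\Aff^+(X,\omega)$, so it is enough that the multitwist itself is affine.
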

\begin{proof}
For a proof see e.g. Lemma 5.4 in \cite{ApisaRtHt} or Lemma 2.3 in \cite{wright2021periodic}.
\end{proof}

\subsection{Delaunay triangulations}\label{sec:delaunayBackground}

A triangulation $\tau$ of a Euclidean polygon $P$ is \emph{Delaunay} if for each vertex $v$ of $P$ and triangle $T \in \tau$, the interior of the circumcircle of $T$ does not contain $v$.  That no vertex lies inside the circumcircle of a triangle is also called the \emph{Delaunay condition}.

There is also a ``local" characterization of the Delaunay property as follows. Call the quadrilateral $H(e)$ formed by the union of two adjacent triangles $T_1$ and $T_2$ sharing a common edge $e$ a \emph{hinge}. If $\alpha_1$ and $\alpha_2$ are the angles of $T_1$ and $T_2$, respectively, opposite the edge $e$, then let the \emph{dihedral angle} of $e$ be $\alpha(e) \coloneqq \alpha_1 + \alpha_2$. Then, the hinge $H(e)$ is said to be \emph{locally Delaunay} if $\alpha(e) \le \pi$. It is well-known that the triangulation $\tau$ is Delaunay in the first sense if and only if $H(e)$ is locally Delaunay for all edges $e \in \tau$. See Figure \ref{fig:hinges} for examples of hinges. Note that when the dihedral angle of an edge \emph{equals} $\pi$, then the four vertices of the hinge all lie on the (common) circumcircle of $T_1$ and $T_2$. Such hinges are said to be \emph{degenerate}. A triangulation $\tau$ with $\alpha(e) < \pi$ for all edges $e \in \tau$ is said to be a \emph{non-degenerated} Delaunay triangulation, and it is well-known that there is at most one such triangulation.

\begin{figure}
    \centering
    \includegraphics[scale=0.65]{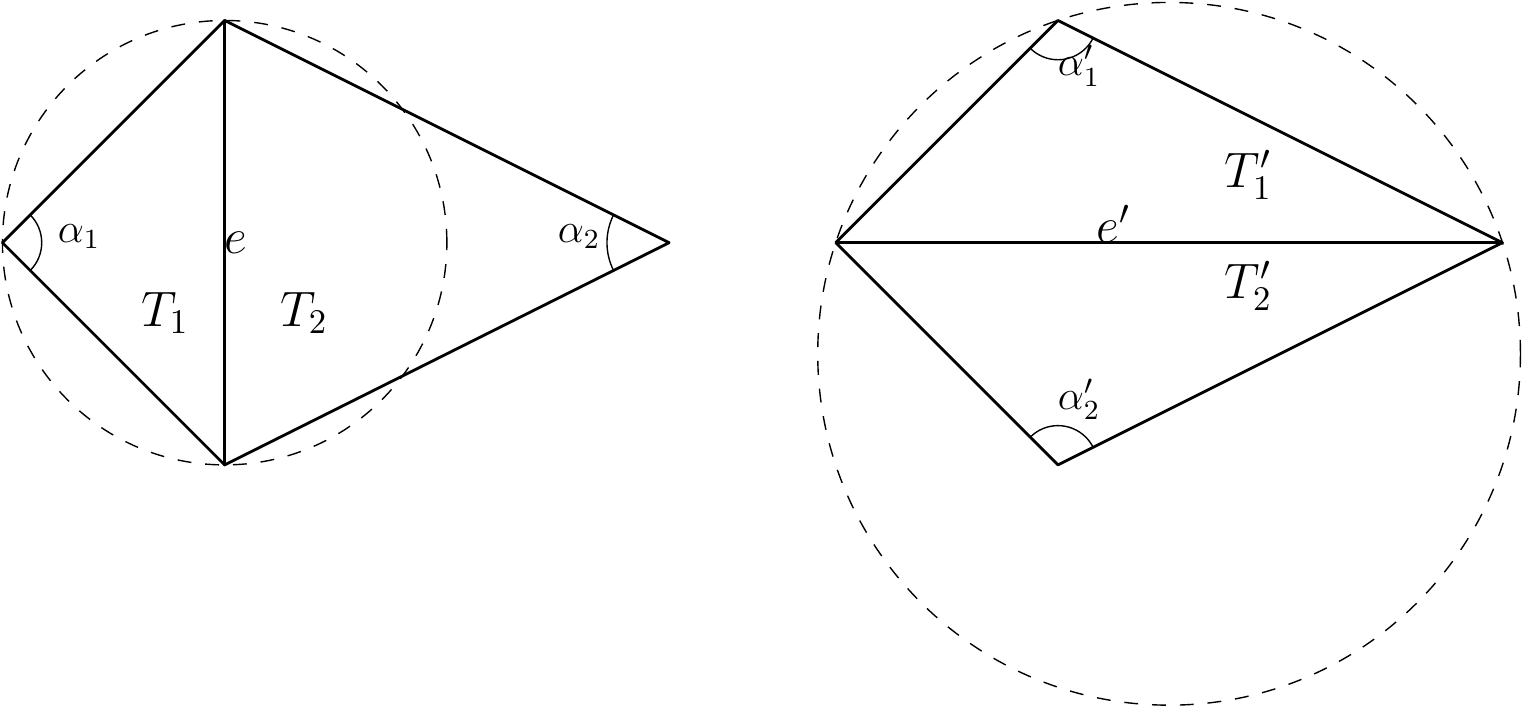}
    \caption{The left hinge is locally Delaunay because $\alpha(e) \le \pi$, whereas the right hinge is not since $\alpha(e') > \pi$. Note that the circumcircle of $T_1'$ contains the vertex of $T_2'$ opposite $e$, so that its triangulation is not Delaunay.}
    \label{fig:hinges}
\end{figure}

The local Delaunay condition extends naturally to a definition of a Delaunay triangulation for a translation surfaces. The existence of a Delaunay triangulation for any translation surface, and their uniqueness for a generic translation surface, were established by Masur and Smillie in \cite{MasSmi}. See also Bowman's exposition in \cite{bowman} and the independent work of Veech in \cite{Veech_F_Strs}.

Given any $g \in \SL(2, \RR)$ and $\tau$ a triangulation of $(X, \omega)$, then $g\cdot \tau = \{g\cdot T | T \in \tau\}$ is a triangulation of $g \cdot (X, \omega)$.  When $\tau$ is a Delaunay triangulation, it is fruitful to consider which matrices $g$ preserve the Delaunay property:

\begin{definition}
    Fix a translation surface $(X, \omega)$, and let $\tau$ be a triangulation of $(X, \omega)$.  The \emph{Iso-Delaunay Region (IDR)} $(S, \tau)$ for $\tau$ is the maximal connected open subset $S \subset \mathbb{H}\cong \SO(2, \RR) \backslash \SL(2, \mathbb{R})$ consisting of right cosets $[g]$ such that $g \circ \tau$ is a Delaunay triangulation of $g \cdot (X, \omega)$.
\end{definition}

With respect to the hyperbolic metric on $\HH$, each IDR is a finite-area, geodesically convex hyperbolic polygon, possible with some vertices at infinity (see Proposition 2.12 in \cite{bowman}). We will call the ideal vertices of an IDR its \emph{cusps}.

Consider the family of triangulations $\{g_{-t} \cdot \tau\}$, where
\[g_t = \begin{pmatrix} e^{t} & 0 \\ 0 & e^{-t} \end{pmatrix}\]
is the Teichmüller geodesic flow. Suppose $\tau$ is a Delaunay triangulation of $(X, \omega)$ such that the corresponding IDR $(S, \tau)$ has a cusp at infinity. Then, it follows that $g_{-t} \cdot \tau$ is a Delaunay triangulation of $g_{-t} \cdot (X, \omega)$ for all $t \ge 0$. In other words, such a translation surface has a Delaunay triangulation that persists under contraction in the horizontal direction and expansion in the vertical direction. Translation surfaces with this property were more amenable to our implementation of our algorithm, motivating the following definition (a version of which appeared in \cite{Veech_F_Strs}):

\begin{definition}
We say a hinge $H$ composed of two triangles in a Delaunay triangulation $\tau$ is \emph{eternally Delaunay} if, for all $t \ge 0$ the transformed hinge $g_{-t} \cdot H$ satisfies the local Delaunay property. If every hinge $H$ in a Delaunay triangulation $\tau$ is eternally Delaunay, then we say the triangulation $\tau$ is \emph{eternally Delaunay}.
\end{definition}
See Section 4 for useful properties of eternally Delaunay triangulations.
We remark that cusps at other boundary points correspond to invariance of the Delaunay property under a rotated $g_t$-flow. For example, if an IDR $(S, \tau)$ for a Delaunay triangulation $\tau$ has a cusp at $0 \in \RR \subset \partial\HH$, then $\tau$ remains Delaunay under $g_{-t}^{-1} \equiv g_t$ flow.

\section{Algorithm for finding periodic points} \label{secAlgo}

In this section, we describe the results which lead up to a proof of Theorem \ref{thm:algo}. First, \autoref{sec:rationality-constraints} describes how we use the Rational Height Lemma (Lemma \ref{lem:rational-height}) to obtain constraints on the coordinates of periodic points. Then \autoref{sec:constraint_reduction} describes an algebraic result called the Constraint Reduction Lemma (\autoref{lem:constraint-reduction}), which describes how constraints can be reduced to a linear equation. With this tool set up, \autoref{sec:finite-set-of-lines} described how we reduce our search space to a finite set of lines. Then, \autoref{sec:seg_to_points} describes how we can reduce our search space further to a finite set of candidate points. Finally \autoref{sec:algo-thm-proof} puts everything together explicitly into one algorithm. Further results pertinent to the implementation of this algorithm are described in Section \ref{secGen}.

\subsection{Rationality Constraints from Cylinders} \label{sec:rationality-constraints}

The Rational Height Lemma (\autoref{lem:rational-height}) tells us that the height of a periodic point inside a cylinder is rational.
Once we fix a coordinate system for points in the cylinder where the origin is in some corner, the height of a point $(x, y)$ will be some linear polynomial $ax + by + c \in K[x, y]$, where $K$ is the field of definition of the translation surface.
Then \autoref{lem:rational-height} specifically tells us that $ax + by + c \in \QQ$ when $(x, y)$ are the coordinates of a periodic point.
We call this a \emph{constraint}, since it constrains the possible coordinates of periodic points.

Our algorithm depends on applying \autoref{lem:rational-height} to two distinct cylinders, so that many constraints might be combined into a linear equation. Thus we apply the lemma to regions, defined as follows:

\begin{definition}[Region]
A region is a connected component of the intersection of two cylinders.
\end{definition}

Without loss of generality, our surface decomposes into horizontal and vertical cylinders (by applying a rotation and shear). Thus we can define regions $R$ on the surface by a horizontal cylinder $H$ and a vertical cylinder $V$. Let $h_H, c_H$ be the height and circumference of $H$, and $h_V, c_V$ the height and circumference of $V$. We embed the region in the Cartesian plane by putting one of the corners at the origin, and drawing the cylinders $H$ and $V$ so that they lie in the first quadrant.

This embedding defines coordinates $(x, y)$ for points in $R$. Suppose $P = (x, y)$ is a periodic point. Then applying \autoref{lem:rational-height} to $H$ and $V$ respectively, we get two constraints for periodic points $(x, y) \in R$:

\[ Q_0 = \frac1{h_H}y \in \QQ; \quad Q_1 = \frac1{h_V}x \in \QQ.\]

We also wish to apply \autoref{lem:rational-height} to a periodic point after the application of a Veech group element, in order to obtain a third constraint.
By \autoref{lem:parabolic-matrix-exists}, there is a parabolic in the Veech group that is a horizontal shear of the form \[T = \begin{pmatrix} 1 & k_H/m_H \\ 0 & 1 \end{pmatrix},\] with $m_H = c_H/h_H$ the modulus and $k_H$ the multiplicity of the cylinder $H$.
Any point in $R$ under the image of $T$ will end up in either $H$, or one of $k_H$ copies of $H$ to the right.
Therefore, in our embedding we draw the entire cylinder $H$, and add $k_H$ copies of $H$ to the right. This defines coordinates for the image $T(x, y)$ for $(x, y) \in R$. See \autoref{fig:cylinder_twists}. 
\begin{figure}
    \centering
    \includegraphics[width=\textwidth]{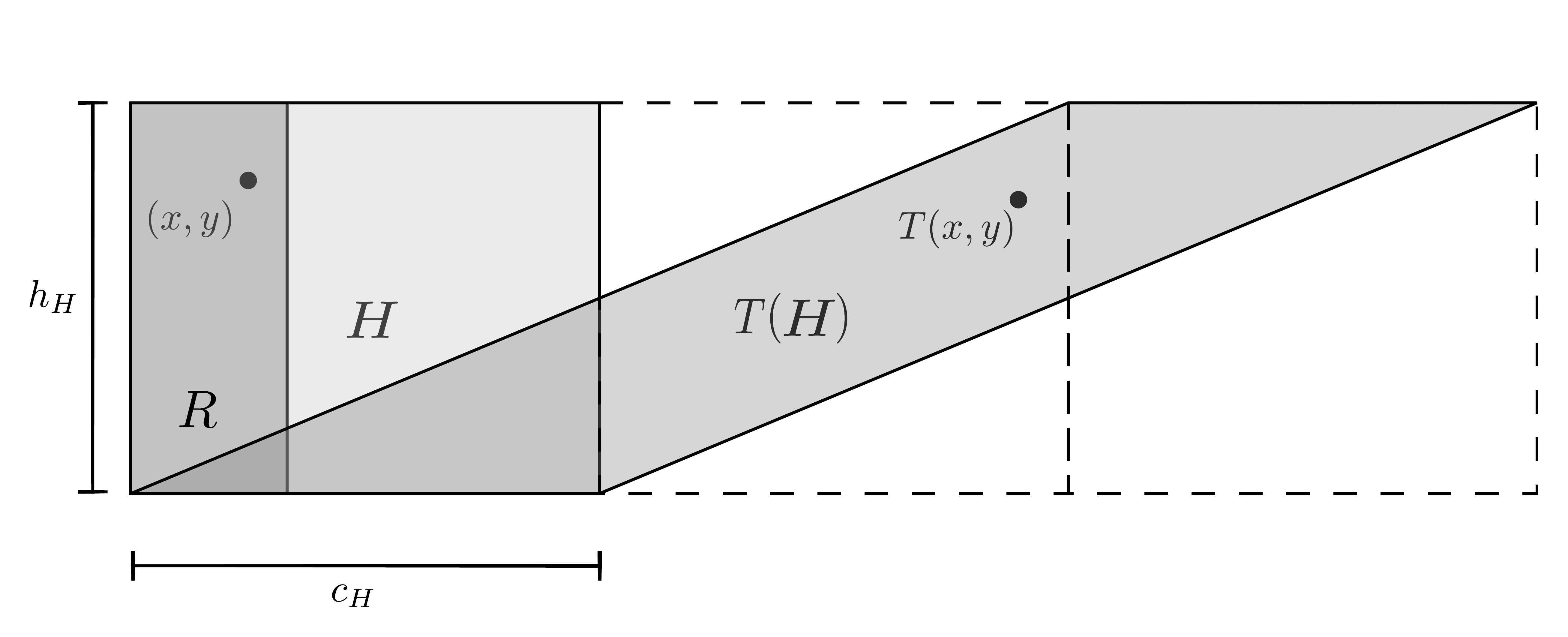}
    \caption{The embedding described above, with cylinder $H$ and it's intersection with a vertical cylinder $R$ labeled. Here the multiplicity $k_H = 2$, so two copies of $H$ are included to the right in the embedding to fit the transformed cylinder $T(H)$.}
    \label{fig:cylinder_twists}
\end{figure}

\pagebreak
Now, consider the $T$ action on the periodic point $(x, y) \in R$.
Suppose this action takes the point to the region $R'$, whose left edge is $d$ apart from the left edge of $R$, as in \autoref{fig:regionshift}.
\begin{figure}
    \centering    \includegraphics[width=\textwidth]{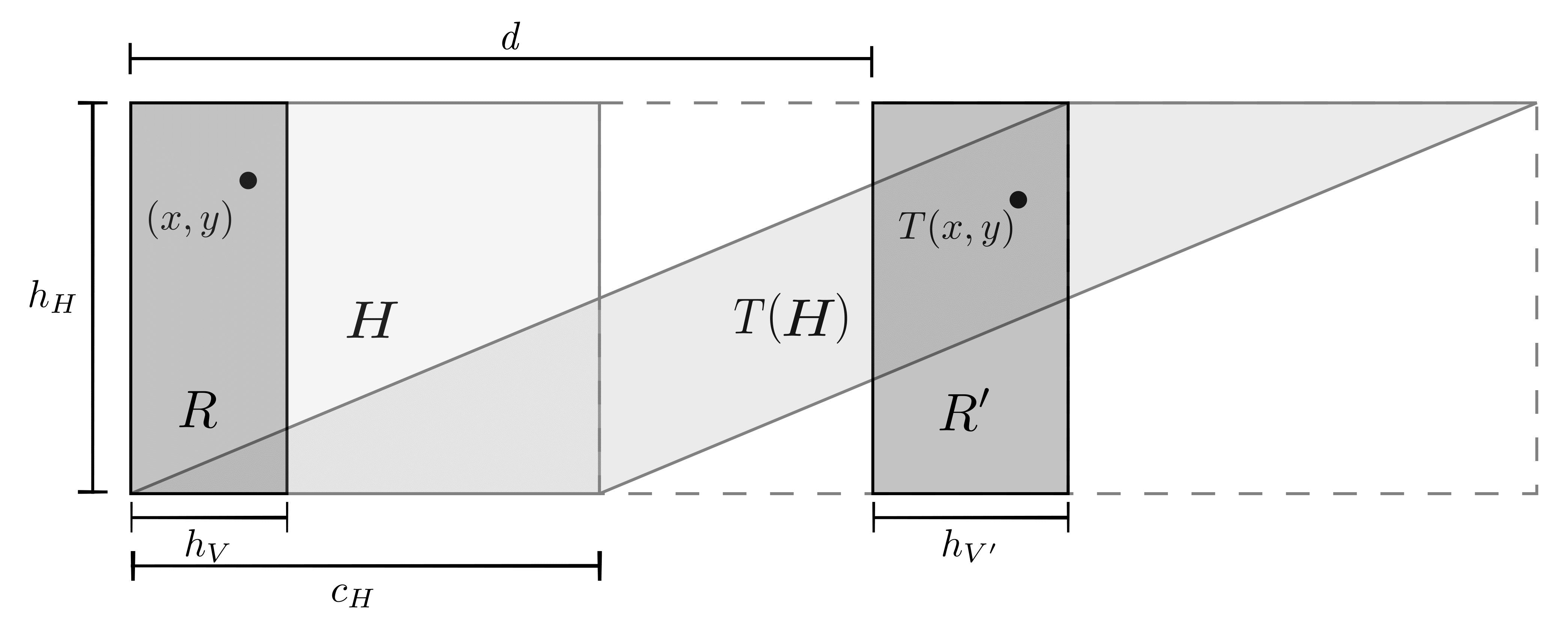}
    \caption{The case where the periodic point $(x, y) \in R$ goes to $T(x, y) \in R'$ under the $T$ action.}
    \label{fig:regionshift}
\end{figure}
If $x'$ is the $x$-coordinate of $T(P)$, the rational height lemma on the vertical cylinder $V'$ of $R'$ gives us

\[\frac1{h_{V'}}(x' - d) \in \QQ. \]

We know $T(x, y) = (x + \frac{k_H}{m_H}y, y)$. Thus we can rewrite the above constraint as
\[ Q_2 = \frac1{h_{V'}}(x + \frac{k_H}{m_H}y - d) \in \QQ.\]

Thus assuming a point is periodic in a region $R$, and that its image under the action of $T$ lies in a region $R'$, provides us with three constraints on its coordinates.

\subsection{The Constraint Reduction Lemma}\label{sec:constraint_reduction}
The previous section shows us how assumptions on the location of a periodic point, or its image under the action of the horizontal shear, can produce constraints on its coordinates. Each constraint produces a measure zero subset of possible periodic points, yet this subset is infinite and dense in every cylinder. To resolve this, we prove an algebraic result which shows how the assumption that three linear functions are rational at a point produces a single linear equation the coordinates of that point must satisfy.

\begin{lemma}[Constraint Reduction Lemma]\label{lem:constraint-reduction}
    Let $K$ be a finite extension of $\QQ$. Take three linear polynomials
    \[
    Q_i(x, y) = a_ix + b_iy + c_i \in K[x, y], \text{ } i=1, 2, 3
    \]
    where $K$ is a nontrivial extension of $\QQ$. Then there exists an algorithm which takes $Q_i$ as input, and outputs a polynomial
    \[Q(x, y) = ax + by + c \in K[x, y]\]
    such that for any $(x, y) \in K^2$, the constraints $Q_i(x, y) \in \QQ$ imply $Q(x, y) = 0$. Moreover, if $Q$ is a constant polynomial, then there exist $d_i \in \QQ$ such that $\sum a_id_i = \sum b_id_i = 0$.
\end{lemma}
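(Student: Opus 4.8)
The plan is to eliminate the coordinates $x,y$ using two of the constraints and feed the result into the third, turning the three rationality conditions into a single $\QQ$-linear relation among the (rational) values $q_i := Q_i(x,y)$. Throughout I would use that $W := K/\QQ$ is a nonzero $\QQ$-vector space, which is where the hypothesis that $K/\QQ$ is nontrivial enters: writing $\pi\colon K \to W$ for the quotient map, an element $\alpha \in K$ lies in $\QQ$ exactly when $\pi(\alpha) = 0$. The one real subtlety is that $\pi$ is only $\QQ$-linear, not $K$-linear; since $x,y$ range over all of $K$ and $\pi(a_i x) \ne \pi(a_i)x$ in general, I cannot read off a relation by applying $\pi$ to each $Q_i(x,y)$ directly. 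This is exactly why elimination has to come first. All the steps below are effective once a $\QQ$-basis of $K$ is fixed.

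Main case: suppose two linear parts, say $(a_1,b_1)$ and $(a_2,b_2)$, are $K$-linearly independent, i.e. $a_1 b_2 - a_2 b_1 \ne 0$. I would solve the invertible system $a_1 x + b_1 y = q_1 - c_1$, $a_2 x + b_2 y = q_2 - c_2$ over $K$ and substitute into $Q_3$. Letting $\lambda_1, \lambda_2 \in K$ be the unique scalars with $(a_3,b_3) = \lambda_1(a_1,b_1) + \lambda_2(a_2,b_2)$, the third condition becomes $\lambda_1 q_1 + \lambda_2 q_2 + C \in \QQ$, where $C := c_3 - \lambda_1 c_1 - \lambda_2 c_2$. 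Applying the $\QQ$-linear map $\pi$ and using $q_1, q_2 \in \QQ$ gives $q_1\,\pi(\lambda_1) + q_2\,\pi(\lambda_2) + \pi(C) = 0$ in $W$. If $\pi(\lambda_1)$ or $\pi(\lambda_2)$ is nonzero, I choose a $\QQ$-functional $\phi$ on $W$ with $(\beta_1, \beta_2) := (\phi(\pi(\lambda_1)), \phi(\pi(\lambda_2))) \ne (0,0)$ and set $\gamma := \phi(\pi(C))$, obtaining $\beta_1 q_1 + \beta_2 q_2 + \gamma = 0$; then $Q := \beta_1 Q_1 + \beta_2 Q_2 + \gamma$ is forced to vanish, and its linear part $\beta_1(a_1,b_1) + \beta_2(a_2,b_2)$ is nonzero because $(a_1,b_1),(a_2,b_2)$ are $K$-independent and $(\beta_1,\beta_2) \ne (0,0)$ is rational, so $Q$ is non-constant. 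If instead $\pi(\lambda_1) = \pi(\lambda_2) = 0$, then $\lambda_1, \lambda_2 \in \QQ$ and the relation $(a_3,b_3) = \lambda_1(a_1,b_1) + \lambda_2(a_2,b_2)$ is itself the advertised degeneracy: I output the constant $Q = 0$ with $(d_1,d_2,d_3) = (\lambda_1, \lambda_2, -1)$, which is rational, nonzero, and satisfies $\sum a_i d_i = \sum b_i d_i = 0$. Thus in this case the output is non-constant precisely when $(a_3,b_3)$ fails to be a $\QQ$-combination of $(a_1,b_1)$ and $(a_2,b_2)$.

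Degenerate ranks: if no two of the $(a_i,b_i)$ are $K$-independent, they span a $K$-space of dimension at most one. If all three vanish, only the constants $c_i$ are constrained and I output $Q = 0$ with, say, $(d_1,d_2,d_3) = (1,0,0)$. Otherwise write $(a_i,b_i) = \rho_i(a,b)$ with $(a,b) \ne 0$; the constraints read $\rho_i u + c_i \in \QQ$ for $u := ax + by$, and I would eliminate $u$ from a pair $i,j$ with $\rho_i, \rho_j \ne 0$. Projecting the resulting relation by $\pi$ either shows $\pi(\rho_j/\rho_i) \ne 0$, which pins $q_i$ to a fixed rational value $r$ and yields the non-constant $Q := Q_i - r$ (non-constant since $(a_i,b_i) = \rho_i(a,b) \ne 0$), or shows $\rho_j/\rho_i \in \QQ$, so that $(a_j,b_j)$ is a rational multiple of $(a_i,b_i)$ and a dependence is manifest (take $d_i = \rho_j/\rho_i$, $d_j = -1$, else $0$). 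Trying all pairs, the procedure returns a constant only when every attempt fails, in which case the nonzero coefficient vectors are pairwise rational multiples and the required $d_i$ exist.

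I expect the main obstacle to be isolating and justifying the clean dichotomy of the main case: one must show that the failure to produce a non-constant line is \emph{equivalent} to the existence of a nontrivial $\QQ$-linear dependence among the vectors $(a_i,b_i)$, which is exactly the content of the \emph{moreover} clause. The conceptual source of this subtlety is the non-$K$-linearity of $\pi$, which dictates the elimination-before-projection order and makes the rationality of the auxiliary scalars $\lambda_i$ --- rather than of the $Q_i$ themselves --- the quantity that decides whether the reduced constraint is vacuous or carves out a genuine line. Verifying that an example such as $(a_i,b_i) = (1,0),(\sqrt2,0),(\sqrt3,0)$ is correctly sent to a non-constant output, rather than spuriously to a constant, is a good consistency check on the case analysis.
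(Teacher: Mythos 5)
Your argument is correct, but it takes a genuinely different route from the paper's. The paper has no case analysis: it forms the single determinant $Q_{big}(Z_1,Z_2,Z_3)=\det(\cdots)=m_1Z_1+m_2Z_2+m_3Z_3+d$, with $m_i$ the $2\times 2$ cofactors of the coefficient matrix, observes that $Q_{big}(q_1,q_2,q_3)=0$ because the third column is $s$ times the first plus $t$ times the second, and then applies one fixed $\QQ$-linear projection $\pi_{\QQ}\colon K\to\QQ$ (the rational coordinate with respect to a basis containing $1$) coefficientwise, setting $Q=\sum_i\pi_{\QQ}(m_i)Q_i+\pi_{\QQ}(d)$ and $d_i=\pi_{\QQ}(m_i)$. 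Your elimination manufactures the same underlying $K$-linear relation among $q_1,q_2,q_3,1$ --- in your main case $(m_1,m_2,m_3)$ is proportional to $(-\lambda_1,-\lambda_2,1)$, since Cramer's rule gives $\lambda_1=-m_1/m_3$ and $\lambda_2=-m_2/m_3$ --- but you first normalize the relation and then allow an \emph{arbitrary} rational functional $\phi$ on $K/\QQ$ rather than one fixed projection. That freedom buys a strictly sharper conclusion: your procedure outputs a constant if and only if a nontrivial rational dependence $\sum d_i(a_i,b_i)=0$ exists, whereas the paper's choice $d_i=\pi_{\QQ}(m_i)$ can be identically zero, leaving its ``moreover'' clause satisfied only vacuously; your own test case $(a_i,b_i)=(1,0),(\sqrt{2},0),(\sqrt{3},0)$ is exactly such an instance, where all cofactors $m_i$ vanish, the paper's formula returns $Q\equiv 0$, and your algorithm correctly recovers the non-constant output $Q=x$. (The non-vacuous form of the dichotomy is in fact what \autoref{lem:region-constraints} uses downstream, where the paper divides by the $d_i$.) What the paper's route buys in exchange is brevity and uniformity: one formula covering all ranks, with no separate treatment of degenerate configurations. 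Two corners of your write-up deserve an explicit sentence each: in the degenerate-rank case with $\pi(\rho_j/\rho_i)\neq 0$, the pinned rational value $r$ need not exist, in which case the constraint set is empty and any output is vacuously valid; and when exactly one $(a_i,b_i)$ is nonzero there is no pair with $\rho_i,\rho_j\neq 0$, so the required dependence should be taken supported on the zero vectors, as in your all-zero case.
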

\begin{proof}
    We construct the polynomial $Q$ in a sequence of steps, which should illustrate the algorithm.
    
    First, let
    \begin{align*}
        Q_{big}(Z_1, Z_2, Z_3) &= \det\begin{pmatrix} a_1 & b_1 & Z_1 - c_1 \\ a_2 & b_2 & Z_2 - c_2 \\ a_3 & b_3 & Z_3 - c_3\end{pmatrix} \\
        &= m_1Z_1 + m_2Z_2 + m_3Z_3 + d.
    \end{align*}
    
    Here $m_1 = a_2b_3 - a_3b_2$, $m_2 = a_3b_1 - a_1b_3$, $m_3 = a_1b_2 - a_2b_1$ and $d = - \sum c_im_i$.
    
    Now, $K$ is a finite extension of $\QQ$, and thus can be expressed as a $\QQ$-vector field, with $1$ as one of the basis elements. Define $\pi_\QQ: K \to \QQ$ to be the map which projects elements of $K$ to the one dimensional subspace spanned by 1. Now, we define
    \[Q_{rat}(Z_1, Z_2, Z_3) = \pi_{\QQ}(m_1)Z_1 + \pi_{\QQ}(m_2)Z_2 + \pi_{\QQ}(m_3)Z_3 + \pi_{\QQ}(d)\]
    
    Finally, we can define $Q$ to be the polynomial obtained by substituting $Q_i$ for $Z_i$. Thus
    \begin{align*}
        Q(x, y) &= Q_{rat}(Q_1, Q_2, Q_3)(x, y)\\
        &= \pi_{\QQ}(m_1)Q_1(x, y) + \pi_{\QQ}(m_2)Q_2(x, y) + \pi_{\QQ}(m_3)Q_3(x, y) + \pi_{\QQ}(d)\\
        &= \left(\sum_i \pi_{\QQ}(m_i)a_i \right)x + \left(\sum_i \pi_{\QQ}(m_i)b_i \right)y + \left(\sum_i \pi_{\QQ}(m_i)c_i \right) + \pi_{\QQ}(d)\\
        &= \left(\sum_i \pi_{\QQ}(m_i)a_i \right)x + \left(\sum_i \pi_{\QQ}(m_i)b_i \right)y + \left(\sum_i \pi_{\QQ}(m_i)c_i - \pi_{\QQ}(m_ic_i) \right)
    \end{align*}

    We now prove that $Q$ has the properties in the statement of the lemma. Suppose $(s, t)$ is a point such that $Q_i(s, t) \in \QQ$ for all $i$. We shall prove that this implies $Q(s, t) = 0.$

    Let $Q_i(s, t) = r_i$. By the assumption, we know that $r_i \in \QQ$. Now, $Q_{big}(r_1, r_2, r_3)$ evaluates to
    \[ Q_{big}(r_1, r_2, r_3) =  \det\begin{pmatrix}
    a_1 & b_1 & a_1s + b_1t \\
    a_2 & b_2 & a_2s + b_2t \\
    a_3 & b_3 & a_3s + b_3t \\
    \end{pmatrix}.
    \]
    
    Now $s, t \in K$, and thus the third column above is a linear combination of the first two. Thus the determinant evaluates to zero i.e.
    \begin{align*}
        Q_{big}(r_1, r_2, r_3) &= 0\\
        m_1r_1 + m_2r_2 + m_3r_3 + d &= 0
    \end{align*}
    
    We have an element of the $\QQ$-vector field $K$ which equals 0, and thus each projection must equal 0. Also the $r_i$ are rational, which implies
    \begin{align*}
        0 &= \pi_{\QQ}(m_1r_1 + m_2r_2 + m_3r_3 + d) \\
        &= \pi_{\QQ}(m_1)r_1 + \pi_{\QQ}(m_2)r_2 + \pi_{\QQ}(m_3)r_3 + \pi_{\QQ}(d) \\
        &= Q_{rat}(r_1, r_2, r_3)
    \end{align*}

    But $Q_i(s, t) = r_i$, and thus $Q_{rat}(r_1, r_2, r_3) = Q_{rat}(Q_1, Q_2, Q_3)(x, y) = Q(x, y)$. Therefore, $Q_i(s, t) \in \QQ$ for all $i$ implies $Q(s, t) = 0$. 
    
    Now, let $d_i = \pi_{\QQ}(m_i)$, which is necessarily rational. Then if $Q$ is a constant polynomial, the coefficients of $x$ and $y$ are both 0. Thus we have $d_i \in \QQ$ such that $\sum d_ia_i = \sum d_ib_i = 0$.
\end{proof}

Thus we have a way to take three constraints, and produce a linear polynomial. We also know that non-constant linear polynomials define lines as their zero sets (if the variables are treated as coordinates). Thus, for an appropriate choice of coordinates $(x, y)$ on the translation surface, if we obtain three constraints $a_ix + b_iy + c_i = Q_i(x, y) \in \QQ$ for some subset of points (with the added condition that there is no $d_i \in \QQ$ such that $\sum a_id_i = \sum b_id_i = 0$), we would be guaranteed that all those points lie on the line defined by $Q(x, y) = 0$.

\subsection{Producing a Finite Set of Line Segments for Each Region} \label{sec:finite-set-of-lines}

With the machinery of \autoref{lem:constraint-reduction}, we can now turn the constraints defined in \autoref{sec:rationality-constraints} into a line segment.

\begin{lemma} \label{lem:region-constraints}
    Consider a Veech surface $(X, \omega)$. Let $R$ and $R'$ be two regions in the same horizontal cylinder $H$, and on vertical cylinders $V$ and $V'$ respectively. Let $T$ be the horizontal shear in $\SL(X, \omega)$ defined by \autoref{lem:parabolic-matrix-exists}, $c_H$ the circumference of cylinder $H$ and $h_V$ the height of cylinder $V$. Then either $c_H/h_V \in \QQ$, or all periodic points $p = (x, y) \in R$ such that $T(p) \in R'$ lie on some line segment in $R$.
\end{lemma}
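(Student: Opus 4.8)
The plan is to collect the three rationality constraints produced in \autoref{sec:rationality-constraints} and feed them to the Constraint Reduction Lemma (\autoref{lem:constraint-reduction}). A periodic point $p = (x,y) \in R$ with $T(p) \in R'$ satisfies, by the Rational Height Lemma (\autoref{lem:rational-height}) applied to $H$, to $V$, and to $V'$ after the shear $T$ of \autoref{lem:parabolic-matrix-exists}, the three constraints $Q_0 = y/h_H \in \QQ$, $Q_1 = x/h_V \in \QQ$, and $Q_2 = (x + \tau y - d)/h_{V'} \in \QQ$, where $\tau$ is the shear coefficient of $T$. Since $T$ acts as $k_H$ Dehn twists on $H$, it translates the top edge of $H$ (at height $h_H$) by $k_H c_H$, so $\tau h_H = k_H c_H$. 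These $Q_i$ are linear polynomials in $K[x,y]$, so \autoref{lem:constraint-reduction} yields a linear polynomial $Q(x,y) = ax + by + c$ that vanishes at every point satisfying the three constraints, in particular at every such periodic point.

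If $Q$ is non-constant, its zero locus is a line $\ell$, and since $R$ is convex (a connected component of an intersection of two cylinders) the intersection $\ell \cap R$ is a line segment. By construction this segment contains every periodic $p \in R$ with $T(p) \in R'$, which is the second alternative of the lemma.

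The work is in the case that $Q$ is constant. Here I would read off the $x$- and $y$-coefficient vectors of the three constraints, $\vec a = (0,\, 1/h_V,\, 1/h_{V'})$ and $\vec b = (1/h_H,\, 0,\, \tau/h_{V'})$. Comparing the first and second entries shows that $\vec a$ and $\vec b$ are linearly independent over $K$, so their common orthogonal line is spanned by the nonzero minor vector $\vec m = \vec a \times \vec b$. The moreover clause of \autoref{lem:constraint-reduction} furnishes a rational vector $\vec d$ orthogonal to both $\vec a$ and $\vec b$; being rational and lying in the line $\RR\,\vec m$, it forces $\vec m$ itself to be proportional to a rational vector, so every ratio of entries of $\vec m$ is rational. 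Computing the first two entries gives $m_1/m_2 = \tau h_H / h_V = k_H c_H / h_V$, and since $k_H$ is a positive integer this yields $c_H / h_V \in \QQ$, the first alternative.

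I expect the main obstacle to be making this constant case airtight. The subtlety is that the existence statement in \autoref{lem:constraint-reduction} permits the trivial solution $\vec d = 0$ (which occurs exactly when the rational projections $\pi_\QQ(m_i)$ all vanish), and in that event the bare existence of $\vec d$ yields no information. The clean fix is to phrase the whole dichotomy in terms of whether $\vec m$ is proportional to a rational vector: it is precisely the non-proportional case that makes the coefficients of $Q$ nonzero, hence $Q$ non-constant and the search set a genuine line segment, while the proportional case is exactly the one delivering $c_H/h_V \in \QQ$. Confirming that the surviving ratio is $m_1/m_2 = k_H c_H/h_V$ and not some other combination is then the one computation that must be carried out carefully.
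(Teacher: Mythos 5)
Your proposal follows the paper's proof almost exactly: the same three constraints $Q_0 = y/h_H$, $Q_1 = x/h_V$, $Q_2 = (x + (k_Hc_H/h_H)y - d)/h_{V'}$, the same application of \autoref{lem:constraint-reduction}, and the same reading of the coefficient vectors $\vec{a} = (0,\, 1/h_V,\, 1/h_{V'})$ and $\vec{b} = (1/h_H,\, 0,\, k_Hc_H/(h_Hh_{V'}))$ in the constant case. Your ratio computation is correct: $m_1 = k_Hc_H/(h_Hh_Vh_{V'})$ and $m_2 = 1/(h_Hh_{V'})$ give $m_1/m_2 = k_Hc_H/h_V$, which matches the paper's conclusion (the paper instead extracts $h_V/h_{V'} \in \QQ$ and $c_H/h_{V'} \in \QQ$ separately and combines them).

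The subtlety you flag in the constant case is genuine, and it is worth noting that the paper's own proof shares it: the moreover clause of \autoref{lem:constraint-reduction} produces $d_i = \pi_{\QQ}(m_i)$, which may all vanish, and the paper divides by $d_2$ (writing $h_V/h_{V'} = -d_1/d_2$), tacitly assuming $\vec{d} \neq 0$. (Granting $\vec{d} \neq 0$ the division is legitimate: $d_2 = 0$ would force $d_1 = 0$ from $\sum d_ia_i = 0$ and then $d_0 = 0$ from $\sum d_ib_i = 0$.) However, your proposed repair is not yet airtight as stated. It is \emph{not} true that the specific $Q$ built from $\pi_{\QQ}$ is constant precisely when $\vec{m}$ is proportional to a rational vector: $Q$ is constant iff $\pi_{\QQ}(\vec{m})$ is orthogonal to both $\vec{a}$ and $\vec{b}$, i.e.\ iff $\pi_{\QQ}(\vec{m}) \in K\vec{m} \cap \QQ^3$, and when $\vec{m}$ is not rationally proportional this intersection is $\{0\}$, yet $\pi_{\QQ}(\vec{m}) = 0$ remains possible once $[K:\QQ] \geq 3$ (every entry of $\vec{m}$ could lie in the span of the non-rational basis elements). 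The complete fix is to run the reduction with \emph{every} coordinate functional $\phi_j$ of a $\QQ$-basis $e_1 = 1, \dots, e_n$ of $K$, not only $\pi_{\QQ} = \phi_1$: the identity $\sum_i m_iQ_i + d = 0$ on the constraint set yields a vanishing linear polynomial for each $j$. If some $\phi_j$ yields a non-constant polynomial, its zero set cut with the convex region $R$ is the desired segment. If all are constant, then each rational vector $\phi_j(\vec{m})$ lies in $K\vec{m}$; since $\vec{m} = \sum_j \phi_j(\vec{m})\,e_j$ is nonzero (indeed $m_2 = 1/(h_Hh_{V'}) \neq 0$), some $\phi_j(\vec{m})$ is a nonzero rational vector proportional to $\vec{m}$, and your computation $m_1/m_2 = k_Hc_H/h_V$ then delivers $c_H/h_V \in \QQ$. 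In the quadratic case $K = \QQ(\sqrt{D})$, which covers the paper's applications, $\pi_{\QQ}(\vec{m}) = 0$ already forces $\vec{m} \in \sqrt{D}\cdot\QQ^3$, so the single functional suffices there; for higher-degree trace fields the multi-functional version is needed.
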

\begin{proof}
From \autoref{sec:rationality-constraints}, we have three constraints on any periodic point $(x, y) \in R$ which goes to $R'$ under the $T$ action:

\[ Q_0 = \frac1{h_H}y \in \QQ,\]
\[ Q_1 = \frac1{h_V}x \in \QQ.\]
\[ Q_2 = \frac1{h_{V'}}(x + k_H\frac{c_H}{h_H}y - d) \in \QQ.\]

Suppose $K$ is the field of definition of $(X, \omega)$. We have three polynomials $Q_0, Q_1, Q_2 \in K[x, y]$ such that $Q_i(x, y) \in \QQ$. By \autoref{lem:constraint-reduction}, these produce a linear $Q \in K[x, y]$ such that $Q(x, y) = 0$. Therefore any periodic point $(x, y) \in R$ with $T \cdot (x, y) \in R'$ satisfies $Q(x, y) = 0$. 

We now try to figure out the possibility of $Q$ not representing a line, by applying the degeneracy condition from \autoref{lem:constraint-reduction}. From the polynomials $Q_i$, we have the $a_i$ and $b_i$ (in vector form):
\[ \Vec{a} = \begin{pmatrix} 0 & \frac1{h_V} &  \frac1{h_{V'}} \end{pmatrix}, \quad 
\Vec{b} = \begin{pmatrix} \frac1{h_H} & 0 & \frac{k_H}{h_{V'}}\frac{c_H}{h_H} \end{pmatrix}.\]

Now, if $Q$ is a constant polynomial, we have $d_i \in \QQ$ such that $\sum d_ia_i = \sum d_ib_i = 0$. Plugging in the $a_i$, we would have $d_1/h_V + d_2/h_{V'} = 0$ or $h_V/h_{V'} = - d_1/d_2 \in \QQ$. Plugging in the $b_i$, we get $d_0/h_H + d_2\frac{k_H}{h_{V'}}\frac{c_H}{h_H} = 0$ or $c_H/h_{V'} = - \frac{d_0}{k_Hd_2} \in \QQ$. Combined, these imply $\frac{c_H}{h_V} \in \QQ$. 

Thus if we have $\frac{c_H}{h_V} \notin \QQ$, then the polynomial $Q$ defines a line segment by \autoref{lem:constraint-reduction}. In that case, all periodic points such that $p \in R$ and $T(p) \in R'$ must be on the line segment $\{(x, y) \in R: Q(x, y) = 0.\}$
\end{proof}

With this lemma, for each region we can construct the finite set of segments on which periodic points must lie.

\begin{lemma} \label{lem:region-candidate-segments}
    Consider a non-square-tiled Veech surface $(X, \omega)$. For each region $R$ on the surface, there exists a finite set of line segments $L$ such that all periodic points in $R$ lie on some line segment $\ell \in L$.
\end{lemma}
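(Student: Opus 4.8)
The plan is to fix a single region $R = H \cap V$ and cover all of its periodic points by applying \autoref{lem:region-constraints} to every region into which $R$ can be sheared. First I would record the finiteness input: a Veech surface has only finitely many cylinders in any periodic direction, so there are finitely many horizontal cylinders, finitely many vertical cylinders, and hence finitely many regions; in particular the fixed horizontal cylinder $H$ meets only finitely many vertical cylinders $V'$, giving finitely many regions $R' = H \cap V'$ inside $H$. Since the parabolic $T$ furnished by \autoref{lem:parabolic-matrix-exists} is a horizontal shear, it preserves $H$ setwise, so for every periodic point $p \in R$ the image $T(p)$ lands in one of these finitely many target regions $R'$.

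Next I would run \autoref{lem:region-constraints} on each pair $(R, R')$. Whenever the non-degeneracy hypothesis $c_H/h_V \notin \QQ$ holds, the lemma outputs a single line segment containing every periodic $p \in R$ with $T(p) \in R'$; taking the union of these segments over the finitely many $R'$ produces a finite set $L$ of segments containing every periodic point of $R$. Thus the statement is immediate for every region $R$ whose horizontal circumference and vertical width are incommensurable.

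The step I expect to be the main obstacle is the degenerate case $c_H/h_V \in \QQ$, in which the three rationality constraints coming from $H$, $V$, and the $T$-shear collapse and fail to pin down a line. Here the plan is to exploit the freedom to generate \emph{further} constraints: applying instead the vertical shear (the $90^\circ$-rotated analogue of \autoref{lem:parabolic-matrix-exists}) and rerunning the argument covers the periodic points of $R$ by segments as soon as $c_V/h_H \notin \QQ$, and more generally one may feed the Rational Height Lemma (\autoref{lem:rational-height}) a third periodic direction and combine the resulting constraint with those from $H$ and $V$ through the Constraint Reduction Lemma (\autoref{lem:constraint-reduction}). This is precisely where the hypothesis that $(X,\omega)$ is non-square-tiled becomes essential: its trace field is strictly larger than $\QQ$, so some such constraint must be rationally independent of the horizontal and vertical ones. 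Were this to fail for every available direction, all the relevant cylinder circumferences and widths would be rationally commensurable, so after rescaling by a suitable diagonal matrix every saddle-connection holonomy would lie in $\QQ \cdot \ell_h \times \QQ \cdot \ell_v$, making $(X,\omega)$ square-tiled and contradicting the hypothesis. Hence for a non-square-tiled surface some triple of constraints is non-degenerate, yields a genuine line segment, and the finite union over all target regions (in both directions) gives the desired finite set $L$.
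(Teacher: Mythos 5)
Your overall strategy is the same as the paper's: fix $R = H \cap V$, observe that the horizontal shear $T$ preserves $H$ so the image $T(p)$ of any periodic $p \in R$ lands in one of the finitely many regions $R' \subset H$, apply \autoref{lem:region-constraints} to each pair $(R, R')$, and use the non-square-tiled hypothesis to rule out degeneracy. The paper, however, avoids your case split entirely: by Gutkin--Judge \cite{GutkinJudge} a non-square-tiled Veech surface has trace field strictly larger than $\QQ$, and Claim~2.1 of \cite{Hubert2006} then gives $(r c_H/h_H)\cdot(s c_V/h_V) \notin \QQ$, i.e.\ $\tfrac{c_H c_V}{h_H h_V} \notin \QQ$, so at least one of $c_H/h_V$, $c_V/h_H$ is irrational and one of the two shear directions is automatically non-degenerate --- no fallback to a third direction is ever needed.

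The genuine gap is your justification of the doubly degenerate case. From $c_H/h_V \in \QQ$ and $c_V/h_H \in \QQ$ (even imposed for all pairs of cylinders) you cannot conclude that ``after rescaling by a suitable diagonal matrix every saddle-connection holonomy lies in $\QQ \cdot \ell_h \times \QQ \cdot \ell_v$'': these conditions constrain only cross-ratios of cylinder dimensions and say nothing about the twist parameters (the horizontal offsets in the cylinder gluings) or relative periods, and rational cylinder dimensions alone do not make a surface square-tiled. The correct bridge is trace-field-theoretic rather than a direct holonomy computation: if both ratios are rational, then the product $t_H t_V$ of the two parabolic shear constants is rational, so the element $P_H P_V$ is hyperbolic with rational trace $2 + t_H t_V$; since the trace field of a lattice Veech group is generated by the trace of any single hyperbolic element, this forces the trace field to equal $\QQ$, contradicting Gutkin--Judge. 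That implication is exactly the content of Claim~2.1 in \cite{Hubert2006}, which the paper cites; your outline is repaired by invoking it, but the rescaling argument you offer in its place would not go through as stated. (Your suggestion to feed a third periodic direction into \autoref{lem:constraint-reduction} is likewise a detour: the product obstruction already guarantees that one of the two original directions works.)
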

\begin{proof}
    Without loss of generality, $R$ is defined by a horizontal and vertical cylinder $H, V$. Let $h_H, c_H$ be the height and circumference of $H$, and $h_V, c_V$ the height and circumference of $V$. Recall that there are nonzero rationals $r, s \in \QQ$ such that the parabolic elements $P_H$ and $P_V$ determined by the horizontal and vertical cylinder decompositions, respectively, are of the form
    $$P_H = \begin{pmatrix}
    1 & r c_H / h_H \\ 0 & 1
    \end{pmatrix}, \qquad P_V = \begin{pmatrix}
    1 & 0 \\ s c_V / h_V & 1
    \end{pmatrix}. $$
    
    Now, since $(X, \omega)$ is not square tiled, its trace field cannot be $\QQ$ (see \cite{GutkinJudge}). By Claim 2.1 in \cite{Hubert2006}, this implies that
    $$(r c_H / h_H) \cdot (s c_V / h_H) \notin \QQ,$$
    or equivalently $\frac{c_Vc_H}{h_Hh_V} \notin \QQ$. Then we have $\frac{c_V}{h_H} \notin \QQ$, or $\frac{c_H}{h_V} \notin \QQ$. Without loss of generality, assume the former is true. Then by \autoref{lem:region-constraints}, for any region $R' \in H$, the periodic points $p \in R, T(p) \in R'$ lie on a line segment ($T$ is the horizontal shear). Iterating over all of the finitely many regions in $H$ (taking $k_H$ extra copies to account for its image under $T$, where $k_H$ is the multiplicity of $H$), we have a finite set of line segments covering all periodic points in $R$.
\end{proof}

\subsection{From Segments To Points}\label{sec:seg_to_points}

We now have a finite set $S$ of line segments on which any periodic points must lie. If we apply Veech group elements to these segments, the periodic points go to periodic points. Thus any periodic point must also lie on some segment in $g \cdot S$. Therefore if we can pick $g \in \SL(X, \omega)$ such that $g\cdot S \cap S$ is a finite set of points, we will have reduced our candidate line segments to a finite set of candidate points. Finding such a $g$ involves the following lemmas:

\begin{lemma}\label{lem:hyperbolic-without-bad-eigen}
Given a finite set of line segments $S$ on a translation surface $(X, \omega)$, there exists a hyperbolic element in the Veech group $g_0 \in \SL(X, \omega)$ such that none of the line segments in $S$ are parallel to eigenvectors of $g_0$.
\end{lemma}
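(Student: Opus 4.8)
The plan is to produce $g_0$ by conjugating a single hyperbolic element and exploiting that the Veech group is large enough to steer the eigendirections of that element away from a finite ``forbidden'' set. Throughout, write $\Gamma = \SL(X, \omega)$ and identify a direction in $\RR^2$ with a point of $\RR P^1 \cong \partial\HH$, on which $\Gamma$ acts by its usual projective action; under this identification the two eigendirections of a hyperbolic element are exactly the two endpoints of its axis, and an eigenvector $v$ of $h$ is carried to an eigenvector $\gamma v$ of $\gamma h \gamma^{-1}$. Let $B \subset \RR P^1$ be the finite set of directions of the segments in $S$. The goal is then to find a hyperbolic $g_0 \in \Gamma$ whose two eigendirections both lie outside $B$.

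First I would fix a single hyperbolic element $h \in \Gamma$. Since $(X, \omega)$ is a Veech surface, $\Gamma$ is a lattice in $\SL(2, \RR)$, hence a non-elementary Fuchsian group, and such groups are well known to contain hyperbolic elements. Let $\xi_+, \xi_- \in \RR P^1$ denote its two eigendirections. For any $\gamma \in \Gamma$ the conjugate $g_0 := \gamma h \gamma^{-1}$ is again hyperbolic, with eigendirections $\gamma \cdot \xi_+$ and $\gamma \cdot \xi_-$. Thus it suffices to find $\gamma \in \Gamma$ with $\gamma \cdot \xi_+ \notin B$ and $\gamma \cdot \xi_- \notin B$.

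The key point is to rule out that every $\gamma$ fails. For each $b \in B$ the set $\{\gamma \in \Gamma : \gamma \cdot \xi_+ = b\}$ is either empty or a single coset of the stabilizer $\mathrm{Stab}_\Gamma(\xi_+)$, and similarly for $\xi_-$. Hence the set of ``bad'' $\gamma$ is contained in a finite union of cosets of the two subgroups $\mathrm{Stab}_\Gamma(\xi_+)$ and $\mathrm{Stab}_\Gamma(\xi_-)$. Because $\xi_\pm$ are hyperbolic fixed points in a discrete group, each stabilizer is virtually cyclic and therefore of infinite index in the lattice $\Gamma$. By the theorem of B.H.\ Neumann that a group covered by finitely many cosets of subgroups must have one of those subgroups of finite index, a finite union of cosets of infinite-index subgroups cannot exhaust $\Gamma$. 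Consequently some $\gamma$ avoids every bad coset, and $g_0 = \gamma h \gamma^{-1}$ is the desired element.

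I expect the only genuine subtlety to be this last step. Naively one would like to say ``there are only finitely many forbidden directions, so most choices of conjugator work,'' but the stabilizer of a hyperbolic eigendirection is infinite, so each forbidden condition already excludes an infinite set of $\gamma$; one really needs the infinite-index/covering input to conclude that these excluded sets do not fill $\Gamma$. An equivalent and perhaps more geometric route bypasses Neumann's lemma by invoking density: the fixed-point pairs (axes) of hyperbolic elements of a lattice are dense in $(\partial\HH \times \partial\HH) \setminus \Delta$, so since the geodesics with an endpoint in the finite set $B$ form a nowhere-dense subset, a hyperbolic axis---and hence a hyperbolic $g_0$---avoiding $B$ at both ends exists. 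Either way, verifying that the obstruction set is ``thin'' inside $\Gamma$ is the crux; the remainder is bookkeeping about how conjugation transports eigendirections.
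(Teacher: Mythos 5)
Your proof is correct, but it takes a genuinely different route from the paper's. The paper argues constructively: after normalizing so that the surface is horizontally and vertically periodic, it invokes \autoref{lem:parabolic-matrix-exists} to obtain parabolic shears $M_H, M_V \in \SL(X, \omega)$ with off-diagonal entries $t_H, t_V$, and considers the explicit family $M_H^a M_V^a$ for $a \in \NN$, whose eigenvector slopes it computes in closed form as $\tfrac12\bigl(at_H \pm \sqrt{a^2t_H^2 + 4t_H/t_V}\bigr)$; strict monotonicity of these slopes in $a$ shows that only finitely many values of $a$ produce an eigendirection parallel to a segment of $S$, and any other $a$ yields $g_0$. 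You instead fix one hyperbolic $h$ and conjugate, and your supporting claims all check out: each forbidden direction contributes at most one coset of $\mathrm{Stab}_\Gamma(\xi_\pm)$; stabilizers of boundary points in a Fuchsian group are elementary, hence virtually cyclic (discreteness forbids a parabolic and a hyperbolic, or two hyperbolics with distinct axes, from sharing a fixed point); these stabilizers have infinite index since a lattice is not virtually cyclic; and Neumann's covering lemma (or, in your alternative, density of hyperbolic axes in $(\partial\HH \times \partial\HH) \setminus \Delta$) then supplies a good conjugator. You also correctly identify the crux that a naive count fails because each forbidden condition excludes an infinite set of conjugators. The trade-off between the two approaches: yours is more general, working verbatim in any non-elementary Fuchsian group without needing two periodic directions or the parabolic elements coming from cylinder decompositions; the paper's is effective, which matters here because this lemma feeds directly into the algorithm of \autoref{thm:algo} --- the closed-form, monotone slope formula lets the implementation simply increment $a$ and test against the finitely many segment directions, whereas your pure existence argument would need an extra enumerate-and-verify layer (decidable, since comparing eigendirections with the finite set $B$ is a finite computation, but less direct to implement).
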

\begin{proof}
Without loss of generality, our translation surface is periodic in both horizontal and vertical directions (up to a rotation and a shear). Then, \autoref{lem:parabolic-matrix-exists} gives us a horizontal shear $M_H$ and a vertical shear $M_V$ in $\SL(X, \omega)$. Let the nonzero off diagonal elements of these matrices be $t_H$ and $t_V$ respectively. Consider the matrix $M_H^aM_V^a$ for $a\in\NN$, which is always a hyperbolic element. An explicit calculation shows that the slopes of the eigenvectors of this matrix are
\[\frac12(at_H \pm \sqrt{a^2t_H^2 + 4t_H/t_V}).\]
We can calculate that the product of the two eigenvector slopes is $-t_H/t_V$. As $a$ increases, the absolute value positive eigenvector slope strictly increases and the absolute value of the negative eigenvector slope strictly decreases. Therefore only finitely many $a$ lead to eigenvector slopes which are parallel to segments in $S$. Picking a different value of $a$, we obtain a hyperbolic element of the Veech group $g_0$ such that none of its eigenvectors are parallel to segments in $S$.
\end{proof}

\begin{lemma} \label{lem:lines-to-points}
Consider a hyperbolic $h \in \SL(X, \omega)$, and a finite set $S$ of line segments none of which are parallel to an eigenvector of $h$. Then there exists $n$ such that $h^n \cdot S \cap S$ is a finite set of points.
\end{lemma}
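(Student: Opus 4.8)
The plan is to reduce everything to a statement about the \emph{directions} of the segments and then exploit the north–south dynamics of the hyperbolic matrix $h$ acting on the space of directions $\mathbb{RP}^1$. The key elementary observation is that two line segments in the plane meet in an infinite set only when their underlying lines coincide; in particular, if two segments are not parallel, they meet in at most one point. Since
\[(h^n \cdot S) \cap S = \bigcup_{\ell, \ell' \in S} (h^n \cdot \ell) \cap \ell'\]
is a finite union (as $S$ is finite), it suffices to choose $n$ so that no segment $h^n \cdot \ell$ is parallel to any segment $\ell'$; then every term of the union is a point or is empty, and the intersection is finite.

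To produce such an $n$, I would diagonalize $h$. Being hyperbolic, $h$ has real eigenvalues $\mu, \mu^{-1}$ with $\mu > 1$ and two distinct eigendirections. Expressing directions in the eigenbasis, a direction of eigencoordinate slope $w$ is sent by $h$ to the direction of slope $\mu^{-2} w$, so that $h^n$ acts on slopes by $w \mapsto \mu^{-2n} w$; the two eigendirections are exactly the fixed slopes $w = 0$ and $w = \infty$. By hypothesis no $\ell \in S$ is parallel to an eigenvector, so each $\ell$ has a finite, nonzero eigencoordinate slope $w_\ell$. Consequently $h^n \cdot \ell$ is parallel to $\ell'$ if and only if $\mu^{-2n} w_\ell = w_{\ell'}$, i.e.\ if and only if $\mu^{-2n} = w_{\ell'}/w_\ell$.

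Finally, I would note that the set of ratios $\{\, w_{\ell'}/w_\ell : \ell, \ell' \in S \,\}$ is finite, since $S$ is finite and every $w_\ell$ is nonzero, while the sequence $n \mapsto \mu^{-2n}$ is strictly monotonic in $n$ (as $\mu^2 > 1$) and therefore attains each real value for at most one $n$. Hence only finitely many $n$ can satisfy $\mu^{-2n} = w_{\ell'}/w_\ell$ for some pair $(\ell, \ell')$; any remaining $n$ (for example all sufficiently large $n$, since $\mu^{-2n} \to 0$) makes $h^n \cdot \ell$ non-parallel to every $\ell'$, which by the first paragraph forces $(h^n \cdot S) \cap S$ to be finite.

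The argument is essentially elementary, so I do not anticipate a genuine obstacle. The only points demanding care are the opening dichotomy — that a \emph{parallel} (hence potentially collinear) pair of segments is the sole source of an infinite intersection, together with the bookkeeping reducing the full intersection to finitely many transverse pairs — and the passage to the eigenbasis, which is what converts the dynamics on directions into the transparent multiplicative action $w \mapsto \mu^{-2n} w$ and thereby makes the finiteness count immediate.
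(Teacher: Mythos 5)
Your proof is correct, but it takes a genuinely different route from the paper's. The paper argues dynamically: it fixes the attracting eigendirection $v_a$ of $h$, sets $\epsilon = \frac{1}{2}\min_{s \in S}\theta(v_a, s)$, and takes $n$ large enough that every direction in $h^n \cdot S$ lies within $\epsilon$ of $v_a$; since every direction in $S$ stays at angle at least $2\epsilon$ from $v_a$, no segment of $h^n \cdot S$ is parallel to a segment of $S$. This extracts only the existence of one good $n$, with no control over which $n$ fail. You instead diagonalize and solve the parallelism condition exactly: in eigencoordinates $h^n$ scales slopes by $\mu^{-2n}$, so the bad exponents are precisely the solutions of the finitely many equations $\mu^{-2n} = w_{\ell'}/w_\ell$, each solvable for at most one $n$ by strict monotonicity of $n \mapsto \mu^{-2n}$. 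Your version buys a sharper conclusion than the paper states --- all but finitely many $n$ work, with an explicit description of the exceptional set and no epsilon bookkeeping --- while the paper's angle argument avoids passing to the eigenbasis and generalizes directly to any setting with north--south dynamics on directions.

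One caveat, which does not break your argument but should be repaired: your opening claim that two non-parallel segments meet in at most one point is true in a single planar chart but not on the translation surface itself, where the intersection $h^n \cdot S \cap S$ is actually taken. Two non-parallel straight segments on $(X, \omega)$ can cross several times (for instance, a long segment wrapping around a cylinder can meet a transverse segment once per wrap); the paper handles this by bounding the number of intersections of a pair of segments in terms of their lengths. Your reduction survives because each segment decomposes into finitely many planar subsegments in a polygonal presentation, and each non-parallel planar pair meets in at most one point, so the total intersection is still finite --- but the claim should be stated as ``finitely many points,'' not ``at most one point.''
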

\begin{proof}
Let $\theta(v_1, v_2)$ be the angle in $[-\pi/2, \pi/2]$ between vectors $v_1, v_2$, defined by 

\[\theta(v_1, v_2) = cos^{-1}\left(\frac{v_1\cdot v_2}{|v_1||v_2|}.\right)\]

The angle between a segment and a vector can be calculating by taking the vector parallel to the segment. This angle is well defined modulo $\pi$.

Consider the attracting eigenvector $v_a$ of $h$, which exists as it a hyperbolic element of the Veech group. For a given $\epsilon > 0$ and $v$ not parallel to eigenvectors of $h$, there exists $n = n(\epsilon, v)$ such that $|\theta(v_a, h^n\cdot v)| < \epsilon$.

By assumption, in our set $S$ none of the line segments are parallel to $v_a$. Set $\epsilon = \frac12\min_{s\in S} \theta(v_a, s)$. Now, pick
\[n = \max_{s\in S} n(\epsilon, s).\]

Thus $|\theta(v_a, h^n\cdot s)| < \epsilon$ for all $s\in S$. So any segment in $h^n \cdot S$ has a different slope from all the segments in $S$, and so their overlap can only be a set of points. 

For a particular $n$, we have a bound on the length of segments in $h^n \cdot S$ and thus a bound on the number of possible intersections between a segment in $h^n \cdot S$ and a segment in $S$. Therefore the intersection $h^n \cdot S \cap S$ will be a finite set of points.
\end{proof}

Therefore by picking $h$ from \autoref{lem:hyperbolic-without-bad-eigen} and $n$ from \autoref{lem:lines-to-points}, we obtain $g = h^n$ such that $g \cdot S \cap  S$ is a finite set of points.

\subsection{Reducing Candidate Points to Periodic Points}

We are very close to obtaining the exact set of periodic points on a translation surface. Before we can prove \autoref{thm:algo}, we must describe one last subroutine:

\begin{lemma} \label{lem:reduce-candidate-points}
Consider a translation surface $(X, \omega)$ and a finite set $S$ which contains all of its periodic points. There exists an algorithm which takes the generators of $\SL(X, \omega)$ as input and outputs the set of periodic points of the surface.
\end{lemma}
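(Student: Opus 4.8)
The plan rests on a single observation: every $\phi \in \Aff^+(X,\omega)$ permutes the periodic points, since it carries a finite $\Aff^+(X,\omega)$-orbit to a finite one. Consequently a candidate point $p \in S$ is periodic \emph{if and only if} its entire $\Aff^+(X,\omega)$-orbit is contained in $S$: if $p$ is periodic its orbit consists of periodic points, all lying in $S$ by hypothesis; and if the orbit lies in the finite set $S$ then it is finite, so $p$ is periodic. Thus the periodic points are exactly the maximal $\Aff^+(X,\omega)$-invariant subset of $S$, and the whole task reduces to computing this subset effectively.

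First I would realize the given Veech generators as genuine self-maps of $(X,\omega)$ acting on the finitely many points of $S$. Given a generator $g \in \SL(X,\omega)$, the surface $g\cdot(X,\omega)$ is translation-equivalent to $(X,\omega)$, and I would compute the cut-and-paste identification realizing this equivalence (for instance by matching the canonical non-degenerate Delaunay triangulations of the two surfaces, as discussed in \autoref{sec:delaunayBackground}) and record the resulting affine automorphism $\phi_g$ together with its inverse $\phi_g^{-1} = \phi_{g^{-1}}$. Tracking the coordinates of each $p \in S$ through this identification computes $\phi_g(p)$ exactly, since $(X,\omega)$ is defined over a number field and all the relevant data are algebraic.

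With the finite list of maps $\{\phi_{g_j}^{\pm 1}\}$ in hand, I would extract the maximal invariant subset by a pruning loop: starting from $S_0 = S$, repeatedly delete from $S_i$ any point $p$ with $\phi_{g_j}^{\pm 1}(p) \notin S_i$ for some $j$, obtaining $S_{i+1}$; since $S$ is finite this stabilizes after finitely many steps at a set $P$ satisfying $\phi_{g_j}^{\pm 1}(P) \subseteq P$ for all $j$. No periodic point is ever deleted, because its image under any $\phi_{g_j}^{\pm 1}$ is again periodic and hence lies in $S \supseteq S_i$; so $P$ contains all periodic points. Conversely, every $p \in P$ has its orbit under the subgroup $\Gamma = \langle \phi_{g_j} \rangle$ contained in the finite set $P$. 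Because the $\phi_{g_j}$ are lifts of generators of $\SL(X,\omega)$, one has $\Aff^+(X,\omega) = \Gamma \cdot \ker D$ with $\ker D$ the finite group of translation automorphisms, so $\Gamma$ has finite index in $\Aff^+(X,\omega)$; hence a finite $\Gamma$-orbit forces a finite $\Aff^+(X,\omega)$-orbit, and $p$ is periodic. Therefore $P$ is exactly the set of periodic points, and the algorithm outputs $P$.

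The hard part will be the realization step, not the bookkeeping: effectively producing the cut-and-paste map that witnesses $g\cdot(X,\omega) \cong (X,\omega)$ and evaluating it at the candidate points, compatibly across all generators so that orbits are tracked correctly. The combinatorial pruning and the finite-index argument are routine, but attaching a concrete affine automorphism to each Veech group element is the computationally substantive component; matching canonical Delaunay triangulations (unique for a generic surface, and otherwise handled via the eternally-Delaunay machinery of Section 4) is the natural tool I would use to carry it out.
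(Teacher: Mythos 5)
Your proposal is correct and takes essentially the same approach as the paper: the paper computes the same maximal invariant subset of $S$ by building a graph on the candidate points with a sink vertex $v_X$ for images escaping $S$ and discarding the connected component of $v_X$, which is equivalent to your pruning loop, and its correctness argument is the same two-sided one you give (periodic points are never discarded; a surviving component is permuted by the generators, so its points have finite orbit and are periodic). Your extra care about realizing Veech group elements as concrete affine automorphisms $\phi_g$ and about the finite kernel of $D$ fills in details the paper leaves implicit in its step ``calculate $p' = g_k \cdot p_i$,'' but does not change the route.
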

\begin{proof}
Suppose $p_1, \dots p_n$ are the points in $S$, and $g_1,\dots, g_m$ are the generators of $\SL(X, \omega)$. Note that since $\SL(X, \omega)$ is a lattice, it is finitely generated. The algorithm then proceeds as follows:
\begin{enumerate}
    \item Create a graph with a vertex $v_i$ for each $p_i \in S$, as well as a special vertex $v_X$
    \item For each point $p_i$ and generator $g_k$, calculate the point $p' = g_k \cdot p_i$. If $p' = p_j \in S$, then draw an edge between $v_i$ and $v_j$. Otherwise if $p' \notin S$, draw an edge between $v_i$ and $v_X$.
    \item Find the vertices which are not in the connected component containing $v_X$. Then the points corresponding to those vertices are our periodic points.
\end{enumerate}

If we consider a connected component without $v_X$, then the points in that component get permuted by any generator $g_k \in \SL(X, \omega)$. Thus for any point in that component, the component is its $\SL(X, \omega)$ orbit. This is finite, therefore every point which we include in our output is a periodic point.

Now consider a point $p_i$ in the connected component with $v_X$. Following a path from $v_i$ to $v_X$, we can construct an element of $\SL(X, \omega)$ which sends $p_i$ to a point outside of $S$, which must be non-periodic. The $\SL(X, \omega)$ orbit of a periodic point consists of periodic points, thus $p_i$ cannot be periodic. Since $S$ contains all the periodic points, taking the points in the connected components not containing $v_X$ precisely outputs the set of periodic points. 
\end{proof}

\subsection{Proof of \autoref{thm:algo} and \autoref{cor:finiteness}} \label{sec:algo-thm-proof}

We are finally ready to prove \autoref{thm:algo}. We restate the theorem with specifics:

\begin{theorem}[\autoref{thm:algo}]
Suppose $(X, \omega)$ is a non-square-tiled Veech surface.  There is an algorithm that takes $(X, \omega)$ as input and outputs the periodic points on the surface.
\end{theorem}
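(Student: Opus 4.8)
The plan is to assemble the lemmas established above into a single procedure, so the proof is mostly a matter of identifying the correct order of operations and verifying that no periodic points are lost at any stage. First I would normalize the input: by applying a rotation and then a shear I may assume that $(X, \omega)$ is periodic in both the horizontal and vertical directions. This is harmless because the chosen $g \in \SL(2, \RR)$ induces an affine map $(X, \omega) \to g \cdot (X, \omega)$ that conjugates the affine automorphism groups and carries periodic points bijectively to periodic points; hence it suffices to locate the periodic points on the normalized surface and pull them back. Computing the horizontal and vertical cylinder decompositions then partitions the surface into finitely many regions.

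Next, for each of these finitely many regions $R$ I would invoke \autoref{lem:region-candidate-segments} to produce a finite set of line segments in $R$ whose union contains every periodic point of $R$. Taking the union over all regions yields a single finite set $S$ of line segments containing all periodic points of $(X, \omega)$. The non-square-tiled hypothesis enters here exactly as \autoref{lem:region-candidate-segments} requires: via the trace-field argument it guarantees the relevant ratio of lengths is irrational, so that each region contributes genuine segments rather than a dense set of candidate points.

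The crux of the argument is converting this finite collection of segments into a finite collection of points. Here I would apply \autoref{lem:hyperbolic-without-bad-eigen} to $S$ to obtain a hyperbolic element $g_0 \in \SL(X, \omega)$ none of whose eigendirections is parallel to a segment of $S$, and then apply \autoref{lem:lines-to-points} to $h = g_0$ to find a power $n$ for which $g_0^n \cdot S \cap S$ is a finite set of points; note the eigendirections of $g_0^n$ agree with those of $g_0$, so the hypothesis of \autoref{lem:lines-to-points} is met. The one subtle point is that this finite set still contains every periodic point: if $p$ is periodic then $p$ lies on some segment of $S$, and since $g_0^{-n} \in \SL(X, \omega)$ sends periodic points to periodic points, the point $g_0^{-n} \cdot p$ also lies on $S$, whence $p \in g_0^n \cdot S$. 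Thus $p \in S \cap (g_0^n \cdot S)$, a finite set.

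Finally I would feed this finite candidate set to \autoref{lem:reduce-candidate-points}, together with a generating set for the lattice $\SL(X, \omega)$, to decide exactly which candidates have finite orbit, i.e. which are periodic; this outputs the periodic points and establishes the theorem. The same construction immediately yields \autoref{cor:finiteness}, since $S \cap (g_0^n \cdot S)$ is by construction a finite superset of the periodic points. I expect the main obstacle to be purely expository, namely making precise that applying $g_0^n$ to $S$ loses no periodic points and that the normalization step is reversible, since the genuinely difficult analytic and algebraic content has already been packaged into the preceding lemmas.
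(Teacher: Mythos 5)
Your proposal is correct and follows essentially the same route as the paper's own proof: normalize so the surface is horizontally and vertically periodic, apply \autoref{lem:region-candidate-segments} region by region to get a finite segment set $S$, use \autoref{lem:hyperbolic-without-bad-eigen} and \autoref{lem:lines-to-points} to reduce $S \cap (g_0^n \cdot S)$ to a finite point set, and finish with \autoref{lem:reduce-candidate-points}. Your explicit verification via $g_0^{-n}$ that no periodic point is lost at the intersection step is a slightly more careful rendering of an assertion the paper makes in passing.
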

\begin{proof}
The algorithm has the following steps, whose correctness can be verified by the lemmas in the previous sections:

\begin{enumerate}
    \item Obtain the cylinder decomposition of the surface in two distinct periodic directions. Without loss of generality, by applying shears to the surface we may assume these two periodic directions are the horizontal and vertical directions. These cylinders partition the surface into connected components of intersections of a horizontal cylinder with a vertical cylinder. Call these regions $R_1, R_2, \dots R_k$. In practice, \autoref{cylref} proves that these regions can be obtained through a computationally tractable triangulation, and \autoref{sec:cyl-find-alg} describes the algorithm to obtain a cylinder decomposition producing such regions.
    \item For each region $R_i$, obtain the set of line segments $L_i$ output by \autoref{lem:region-candidate-segments}. We know that all periodic points on $R_i$ lie on some line segment $\ell \in L_i$. Take $S = \bigcup_{i=1}^k L_i$. Then any periodic point on $(X, \omega)$ lies on some segment in $S$.
    \item By Lemma \ref{lem:lines-to-points}, find some $g \in \SL(X, \omega)$ such that $g\cdot S \cap S = P$ is a finite set of points. Any periodic point in $S$ also lies on some segment in $g\cdot S$. Thus $P$ is a finite set of points that contains all the periodic points of $(X, \omega)$.
    \item Apply the algorithm described in \autoref{lem:reduce-candidate-points} on the set $P$, to obtain the set of points periodic under the entire 
    Veech group.
\end{enumerate}
\end{proof}

As a corollary of the previous lemmas, we obtain that a non-square-tiled Veech surface has finitely many periodic points: 

\begin{proof}[Proof of \autoref{cor:finiteness}]
By applying \autoref{lem:region-candidate-segments} to the finitely many regions determined by a horizontal and vertical cylinder decomposition, we obtain a finite set $S$ of line segments containing all of the periodic points. Applying \autoref{lem:lines-to-points} with the hyperbolic element constructed in \autoref{lem:hyperbolic-without-bad-eigen} to this set $S$, we obtain a finite set of points containing all of the periodic points.
\end{proof}

\section{Results on Delaunay triangulations of translation surfaces}\label{secGen}

In order to implement the algorithm detailed in the proof of Theorem \ref{thm:algo}, we require a representation of translation surfaces that lends itself to performing the various steps of the algorithm computationally.  We found Delaunay triangulations of translation surfaces, as detailed in Section \ref{sec:background}, to be the ideal representation.  In this section, we detail results pertaining to Delaunay triangulations of translation surfaces, that are necessary for ensuring an implementation of the algorithm that behaves correctly.  In particular, we explain the details of how we find cylinders as in Step (1) of the algorithm given in proof of Theorem \ref{thm:algo}.

\subsection{Cylinder Refinement}\label{sec:cylinder_refinement}
Call a collection $\{T_i\}$ of triangles in a triangulation of translation surface $(X, \omega)$ a \emph{refinement} of a cylinder $C$, if $C = \bigsqcup T_i$ (see Figure \ref{fig:cylinderRef} for an example).  In addition, we say a triangulation $\tau$ of a translation surface \textit{comes from a Delaunay triangulation $\bar{\tau}$} of a different translation surface, if $\tau = M\cdot \bar{\tau}$ for some $M \in SL(2, \mathbb{R})$.  

We begin by stating the following ``cylinder refinement proposition," that is used in a number of steps of our implementation of the algorithm given in Theorem \ref{thm:algo}, as in determining intersection regions of horizontal and vertical cylinders of the surface.  

\begin{proposition}[Cylinder Refinement]\label{cylref}
Let $(X, \omega)$ be a Veech surface, periodic in direction $v$.  Then there exists a triangulation $\tau$ of $(X, \omega)$, coming from a Delaunay triangulation, such that every triangle $T_i \in \tau$ has an edge parallel to $v$.
\end{proposition}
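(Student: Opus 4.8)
The plan is to exploit the eternally Delaunay property (defined in the excerpt) together with the Teichmüller geodesic flow. First I would reduce to the case where $v$ is the horizontal direction: since $(X,\omega)$ is periodic in direction $v$, I can rotate by some $r \in \SO(2,\RR)$ so that $r\cdot(X,\omega)$ is horizontally periodic, prove the statement for the horizontal direction, and then pull the resulting triangulation back by $r^{-1}$. Because $r^{-1}\cdot\tau$ comes from a Delaunay triangulation if and only if $\tau$ does (both are $\SL(2,\RR)$-translates of a fixed Delaunay triangulation), this reduction preserves the conclusion ``comes from a Delaunay triangulation,'' and an edge parallel to the horizontal direction becomes an edge parallel to $v$.

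Now assume $(X,\omega)$ is horizontally periodic, decomposed into horizontal cylinders. The key idea is to apply the contracting geodesic flow $g_{-t}$ for large $t$ and examine the Delaunay triangulation of $g_{-t}\cdot(X,\omega)$. Under $g_{-t}$, every horizontal saddle connection is contracted by $e^{-t}$ while vertical distances are stretched by $e^{t}$; hence the horizontal circumferences of the cylinders shrink to $0$ while their heights blow up. The main step is to argue that once $t$ is large enough, each horizontal cylinder becomes so tall and thin that its \emph{non-degenerated} Delaunay triangulation must consist entirely of triangles each having a horizontal edge. Concretely, I would track the dihedral angle condition $\alpha(e) < \pi$: a hinge whose diagonal is non-horizontal spans two cylinders or cuts across a cylinder diagonally, and as the cylinder becomes arbitrarily thin the angle opposite such a diagonal tends to $\pi$, eventually violating the strict local Delaunay inequality and forcing an edge flip toward the horizontal. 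Thus for $t \gg 0$ the Delaunay triangulation $\bar\tau$ of $g_{-t}\cdot(X,\omega)$ has every triangle carrying a horizontal edge.

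Finally I would set $\tau := g_{t}\cdot \bar\tau = g_t \cdot \big(\text{Delaunay of } g_{-t}\cdot(X,\omega)\big)$, which is a triangulation of $(X,\omega)$ coming from a Delaunay triangulation by definition. Since $g_t$ is diagonal it preserves horizontality of edges, so every triangle of $\tau$ still has a horizontal edge, completing the proof. I expect the main obstacle to be step two: making rigorous the claim that \emph{every} triangle acquires a horizontal edge for large $t$, rather than merely that the triangulation stabilizes. The cleanest route is probably to invoke the eternally Delaunay framework promised in Section 4 — showing that the IDR of the horizontal cylinder decomposition has a cusp at infinity, so that a single Delaunay triangulation persists under all $g_{-t}$, and then analyzing the limiting shape of its triangles directly. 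One must also handle degenerate hinges (where $\alpha(e)=\pi$ exactly) carefully, since uniqueness of the Delaunay triangulation can fail there; I would either perturb within the cylinder's combinatorics or simply choose $t$ avoiding the finitely many degenerate values.
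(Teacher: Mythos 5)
Your skeleton coincides with the paper's proof: rotate $v$ to the horizontal, use the fact that some Iso-Delaunay Region has a cusp at infinity (so that after flowing by $g_{-t}$ for large $t$ one lands in a Delaunay triangulation that is eternally Delaunay), show every triangle of that triangulation has a horizontal edge, and pull back by $g_t$ and the rotation. Your reduction to the horizontal case and the pull-back step are correct exactly as you describe them. The genuine gap is the step you yourself flag as the main obstacle, and the heuristic you offer for it is false as stated. It is \emph{not} true that every hinge whose diagonal is non-horizontal eventually violates the local Delaunay condition under $g_{-t}$: if both triangles of the hinge carry horizontal edges --- for instance the standard triangulation of a single flat cylinder, where the diagonal runs from the bottom boundary to the top and the two opposite vertices sit at heights $0$ and $y_2$ --- then the dihedral angle stays at most $\pi$ for all $t \ge 0$, and the hinge is eternally Delaunay. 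Indeed, the conclusion of the proposition forces such non-horizontal diagonals to survive in the limiting triangulation; what must be ruled out is a \emph{triangle with no horizontal edge}, not a non-horizontal edge. The correct local statement is \autoref{lem:band}: a hinge with diagonal from $(0,0)$ to $(x_2,y_2)$ fails to be eternally Delaunay precisely when an opposite vertex lies (weakly) in the horizontal band of the diagonal, i.e.\ $0 \le y_1 \le y_2$ and $0 < y_3 < y_2$.

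Even granting that lemma, a hinge-by-hinge analysis does not close the argument, because the vertex opposite a non-horizontal edge of a horizontal-edge-free triangle may escape the band ($y_1 > y_2$ or $y_1 < 0$), keeping that particular hinge eternally Delaunay. The paper resolves this with a global developing argument: starting from a hypothetical triangle with no horizontal edge, eternal Delaunayness forces the adjacent vertex out of the band, producing a next non-horizontal edge whose height is strictly smaller; iterating yields an infinite strictly monotone sequence of distinct edges, contradicting the finiteness of the triangulation. Your fallback --- ``analyze the limiting shape of the triangles directly'' --- points in the right direction but supplies no mechanism; without the band lemma together with this monotone chain (or an equivalent global argument), the central claim remains unproven. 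A minor point in your favor: your worry about degenerate hinges and choosing $t$ to avoid them is unnecessary in the paper's framing, since one works with the single triangulation attached to the open noncompact IDR at the cusp, where the Delaunay property persists for all $t \ge 0$ by definition, rather than with a family of triangulations changing by flips.
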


\begin{corollary}\label{cor:refinement}
Let $(X, \omega)$ be a Veech surface, periodic in direction $v$ with cylinder decomposition $\{C_i\}$ in direction $v$.  Then there exists a triangulation $\tau$ of $(X, \omega)$, coming from a Delaunay triangulation, such that for each cylinder $C_i$, there is a collection of triangles in $\tau$ that form a refinement of $C_i$.
\end{corollary}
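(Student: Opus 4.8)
The plan is to deduce the corollary directly from Proposition~\ref{cylref}. That proposition furnishes a triangulation $\tau$, coming from a Delaunay triangulation, in which every triangle has an edge parallel to $v$. It therefore suffices to show that each such triangle lies in the closure $\overline{C_i}$ of a single cylinder; granting this, the triangles contained in $\overline{C_i}$ tile $\overline{C_i}$ and hence form a refinement of $C_i$, and doing this for every $i$ proves the claim. The two structural facts I would use are standard features of a cylinder decomposition in direction $v$: every singularity lies on a cylinder boundary (all singularities lie on the singular leaves of the foliation in direction $v$), and every saddle connection parallel to $v$ lies on a cylinder boundary (a $v$-parallel saddle connection is a leaf in direction $v$ joining singularities, and the only such leaves are the singular ones, since the regular leaves are nonsingular core curves).

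Next I would run a sweeping argument. Fix a triangle $T$ of $\tau$ with its $v$-parallel edge $b$; by the second fact $b$ lies on some boundary, say the bottom boundary $\partial_- C$ of the cylinder $C$ on the side where $T$ sits. Writing the coordinate perpendicular to $v$ as a height measured from $b$, the triangle occupies heights from $0$ (at $b$) up to $\mathrm{ht}(T)$ (at the apex $a$), and $a$ is a singularity. If $\mathrm{ht}(T) < h_C$, where $h_C$ is the height of $C$, then $a$ would be a singularity lying strictly inside $C$, which is impossible; if $\mathrm{ht}(T) = h_C$, then $a \in \partial_+ C$ and $T \subseteq \overline{C}$, as desired. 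Thus the only case to exclude is $\mathrm{ht}(T) > h_C$, in which $T$ \emph{straddles} the boundary $\partial_+ C$: its two non-$v$-parallel edges cross $\partial_+ C$ at regular points, and a nondegenerate chord of $T$ parallel to $v$ lies inside $\partial_+ C$.

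The hard part will be ruling out this straddling case, and I expect it to be the crux of the argument. Ruling it out is in fact equivalent to showing that every cylinder-boundary saddle connection appears in the $1$-skeleton of $\tau$ (equivalently, that no triangulation edge crosses a $v$-parallel saddle connection transversally): if some boundary saddle connection were missing from the skeleton it would pass through triangle interiors, which is exactly what a straddling triangle produces. To close this gap I would use that $\tau$ comes from a Delaunay triangulation rather than the weaker property in Proposition~\ref{cylref} alone: a straddling triangle is tall in the direction transverse to $v$, spanning the full height of $C$ and more, so its circumcircle must enclose one of the singularities sitting on the boundary $\partial_+ C$ that it crosses, contradicting the Delaunay emptiness condition. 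Making this circumcircle estimate precise---or, alternatively, extracting no-straddling directly from the contraction used to prove Proposition~\ref{cylref}, where the cylinders become arbitrarily long and thin so that no Delaunay triangle can reach across a boundary---is the main technical obstacle; once it is established, the refinement of each $C_i$ by the triangles it contains is immediate.
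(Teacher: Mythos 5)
Your reduction is set up correctly---the sweep argument showing that a triangle with $v$-parallel edge $b$ on $\partial_- C$ either has its apex a singularity in the interior of $C$ (impossible), lies in $\overline{C}$, or straddles $\partial_+ C$ is sound---but the proposal has a genuine gap exactly where you flag it: the no-straddling step is never proved, and the specific route you sketch for it would fail as stated. The circumcircle argument cannot be applied to $\tau$ directly, because ``comes from a Delaunay triangulation'' means only that $\tau = M \cdot \bar{\tau}$ for some $M \in \SL(2,\RR)$ with $\bar{\tau}$ Delaunay on $M^{-1}\cdot(X,\omega)$; in the construction behind \autoref{cylref} one contracts by $g_{-t}$ to obtain the Delaunay property and then flows back, and applying $g_t$ destroys the empty-circumcircle condition. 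So the estimate would have to be carried out on the contracted surface (your alternative suggestion), where one must quantify how thin the cylinders become relative to Delaunay circumradii and then transport the conclusion back through $M$---precisely the unfinished technical work. A proof that stops at ``the main technical obstacle'' for its crux is not a proof.

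It is worth noting that the paper's own argument sidesteps this issue entirely and is softer than what you attempt. Rather than proving per-triangle containment, it assembles the cylinders directly: each triangle of $\tau$ has exactly one $v$-parallel edge (two edges of a Euclidean triangle cannot be parallel), hence exactly two non-$v$-parallel edges, so developing across non-$v$-parallel edges makes each triangle adjacent to exactly two others and the triangles organize into closed chains. The union of a chain is a flat band with no cone point on the interior of any edge or in its interior (all vertices are zeros of $\omega$, which cannot sit in a flat interior), bounded by $v$-parallel saddle connections---hence a cylinder in direction $v$, which by maximality is one of the $C_i$. Your ``crux'' then follows for free: every triangle lies in the band containing it, so no triangle can straddle a boundary. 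If you want to salvage your write-up, the cleanest fix is to replace the unproven circumcircle estimate with this development argument, or to observe that once the bands are known to be cylinders of the decomposition, straddling is excluded without any metric input.
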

\begin{proof}
Let $(X, \omega)$ be a Veech surface periodic in direction $v$, with triangulation $\tau$ such that every triangle $T_i \in \tau$ has an edge parallel to $v$ as by Proposition \ref{cylref}.  We construct a refinement of a cylinder as follows.  Take any triangle $T_i \in \tau$, and inductively develop out across edges not parallel to $v$ to obtain a collection of triangles $\{T_i\}$, where each triangle in the collection is glued to exactly two other triangles also in the collection, along edges not parallel to $v$.  No triangle has a cone point along the interior of an edge by definition, and hence the collection of triangles must compose a cylinder, with the boundary of the cylinder given by the collection of edges parallel to $v$.  See Figure \ref{fig:cylinderRef} for an example.
\end{proof}

\begin{figure}
    \centering
    \includegraphics[scale=.2]{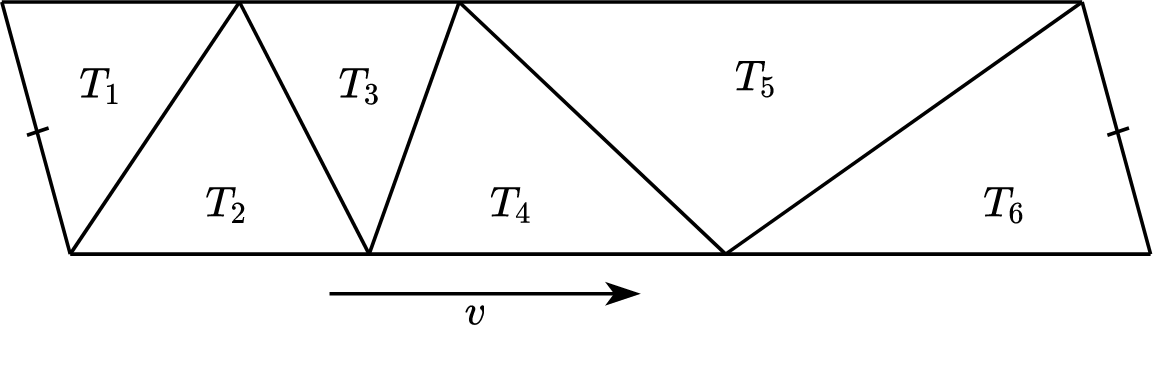}
    \caption{A refinement of a cylinder by a collection of triangles with edges parallel to direction $v$.}
    \label{fig:cylinderRef}
\end{figure}

We require the following lemma before proving Proposition \ref{cylref}.

\begin{lemma}\label{lem:band}
Let $O = (0, 0)$, $P_1 = (x_1, y_1)$, $P_2 = (x_2, y_2)$, $P_3 = (x_3, y_3)$ be the coordinates of a hinge $H$ formed by triangles $T_1 = \bigtriangleup OP_2P_3$ and $T_2 = \bigtriangleup OP_1P_2$ as in Figure \ref{fig:nonED}. Moreover, assume $0 \leq y_1 \leq y_2$, and $0 < y_3 < y_2$. Then $H$ is not eternally Delaunay.
\end{lemma}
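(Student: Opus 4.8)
The plan is to reduce the local Delaunay condition along the flow to the sign of a single determinant, and then to exploit the hypotheses on the $y_i$ to control that sign as $t \to \infty$. The local Delaunay property of the hinge $H$ across the shared edge $OP_2$ is governed by whether the vertex opposite $OP_2$ lies inside the relevant circumcircle, so I would encode it through the in-circle determinant of the flowed hinge $g_{-t}\cdot H$. Writing $(x_i', y_i') = g_{-t}\cdot P_i = (e^{-t}x_i, e^{t}y_i)$, set
\[
f(t) = \det\begin{pmatrix} x_1' & y_1' & (x_1')^2 + (y_1')^2 \\ x_2' & y_2' & (x_2')^2 + (y_2')^2 \\ x_3' & y_3' & (x_3')^2 + (y_3')^2\end{pmatrix}.
\]
With the standard counterclockwise orientation of $T_1$ and $T_2$, the condition $f(t) < 0$ is exactly the failure of the local Delaunay property for $g_{-t}\cdot H$. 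I would fix this sign convention on one explicit reference hinge; it then persists along the whole flow because $g_{-t} \in \SL(2,\RR)$ is orientation-preserving.

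Next I would compute $f(t)$ explicitly. Factoring $e^{-t}$ out of the first column and $e^{t}$ out of the second (these cancel), and splitting the third column $e^{-2t}x_i^2 + e^{2t}y_i^2$ by multilinearity, gives
\[
f(t) = e^{-2t} A + e^{2t} B, \qquad A = \det\begin{pmatrix} x_1 & y_1 & x_1^2 \\ x_2 & y_2 & x_2^2 \\ x_3 & y_3 & x_3^2\end{pmatrix}, \quad B = \det\begin{pmatrix} x_1 & y_1 & y_1^2 \\ x_2 & y_2 & y_2^2 \\ x_3 & y_3 & y_3^2\end{pmatrix}.
\]
The key observation is that the behavior of $f$ as $t \to \infty$ is controlled entirely by $B$, the determinant obtained by replacing $x_i^2 + y_i^2$ with $y_i^2$ (heuristically, vertical stretching makes the $x_i^2$ terms negligible). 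Hence it suffices to prove $B < 0$: then $e^{2t}B$ dominates and $f(t) \to -\infty$, so $f(t) < 0$ for all large $t \ge 0$, and $H$ fails to be eternally Delaunay.

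The crux is establishing $B < 0$ from the hypotheses. Expanding $B$ along its first column and using $\det\begin{pmatrix} y_i & y_i^2 \\ y_j & y_j^2\end{pmatrix} = y_i y_j (y_j - y_i)$, I would regroup the three resulting terms into the form
\[
B = y_1(y_1 - y_2)\,\beta + \alpha\, y_3(y_3 - y_2),
\]
where $\alpha = x_1 y_2 - x_2 y_1$ and $\beta = x_2 y_3 - x_3 y_2$ are twice the signed areas of $T_2 = \triangle OP_1P_2$ and $T_1 = \triangle OP_2P_3$. Since both triangles are positively oriented, $\alpha, \beta > 0$; and the hypotheses $0 \le y_1 \le y_2$, $0 < y_3 < y_2$ give $y_1(y_1 - y_2) \le 0$ and $y_3(y_3 - y_2) < 0$. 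Therefore $B < 0$, completing the argument.

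The main obstacle I anticipate is exactly this last algebraic step: one must discover the correct regrouping of $B$ so that the orientation inequalities $\alpha, \beta > 0$ pair cleanly with the sign data on the $y_i$; the two intermediate terms coming from $x_2 y_1 y_3$ must be made to cancel so that $\beta$ appears. Once the quantity $B$ is isolated, the determinant factorization and the dominant-term limit are routine. A secondary point needing care is fixing the orientation/sign convention for the in-circle determinant so that ``$f(t) < 0$'' genuinely means the local Delaunay property fails, which I would settle on one concrete hinge and propagate via orientation-preservation of $g_{-t}$.
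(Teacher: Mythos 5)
Your proof is correct, but it takes a genuinely different route from the paper. The paper works directly with the dihedral angle: it computes $\cos(\theta_1)$ and $\cos(\theta_2)$ for the two angles opposite the shared edge under $g_{-t}$, and splits into cases, showing $\alpha(e) \to 2\pi$ when $0 < y_1 < y_2$ and $\alpha(e) \to 3\pi/2$ when $y_1 \in \{0, y_2\}$. You instead encode the local Delaunay condition in the in-circle determinant, whose decomposition $f(t) = e^{-2t}A + e^{2t}B$ reduces the whole lemma to the single strict inequality $B < 0$; your factorization $B = y_1(y_1 - y_2)\beta + y_3(y_3 - y_2)\alpha$ is correct (I verified the algebra: expanding $B$ along the first column gives $x_1y_2y_3(y_3-y_2) - x_2y_1y_3(y_3-y_1) + x_3y_1y_2(y_2-y_1)$, which matches your regrouping), and the hypotheses then give $y_1(y_1-y_2) \le 0$ and $y_3(y_3-y_2) < 0$, so $B < 0$ whenever $\alpha, \beta > 0$. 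What your approach buys: it handles the boundary cases $y_1 = 0$ and $y_1 = y_2$ uniformly (the first term merely vanishes), eliminating the paper's case analysis, and it gives an explicit threshold time $t_0$ with failure for all $t \ge t_0$; the paper's angle computation is more geometric and makes the limiting dihedral angle visible. One point to tighten: your sign convention rests on the assumption that both triangles are positively oriented ($\alpha, \beta > 0$). Since $P_1$ and $P_3$ lie on opposite sides of the shared edge $\overline{OP_2}$, one always has $\alpha\beta > 0$, and the case $\alpha, \beta < 0$ reduces to the positive case by the reflection $(x, y) \mapsto (-x, y)$, which commutes with $g_{-t}$, preserves all the hypotheses on the $y_i$, and flips the signs of $\alpha$, $\beta$, $f$ simultaneously; this is cleaner than fixing the convention on a reference hinge and ``propagating,'' and it closes the only gap in your write-up.
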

\begin{proof}
We prove the lemma in two main steps, first assuming $0 < y_1 < y_2$, and second considering the case when $y_1 = 0$ or $y_2$.   In the first case, the standing assumption $0 < y_3 < y_2$ precludes any of the triangles composing the hinge from having a horizontal edge: $T_1$ and $T_2$ are joined along edge $\overline{OP_2}$, and neither $y_1$ nor $y_3$ can equal $0$ or $y_2$.

Let $\theta_1 = \angle P_1OP_3$, and $\theta_2 = \angle P_1P_2P_3$.
When $0 < y_1 < y_2$, it is sufficient to show that under the application of $g_{-t}$ to $H$, both $\cos(\theta_1) \to 1$ and $\cos(\theta_2) \to 1$ as $t \to \infty$. 
For then it would follow that $\theta_1$ and $\theta_2$ approach 0 when $g_{-t}$ is applied to hinge $H$ as $t \rightarrow \infty$.
This implies the dihedral angle $\alpha(e) \rightarrow 2\pi$ and hence $H$ is not eternally Delaunay (see Section \ref{sec:delaunayBackground}).
We demonstrate this fact for $\theta_1$, with the computation for $\theta_2$ being similar.  For $\theta_1 = \angle P_1OP_3$ on the sheared hinge $g_{-t} H$, we have 
\begin{align*}
  \cos(\theta_1) = \frac{g_{-t}\overrightarrow{P_3}\cdot g_{-t}\overrightarrow{P_1}}{||g_{-t}\overrightarrow{P_3}||_2 ||g_{-t}\overrightarrow{P_1}||_2} &= 
    \frac{e^{-2t}x_1x_3+e^{2t}y_1y_3}{\sqrt{(e^{-2t}x_1^2+e^{2t}y_1^2)(e^{-2t}x_3^2+e^{2t}y_3^2)}} \\
    &= \frac{e^{-2t}x_1x_3+e^{2t}y_1y_3}{\sqrt{e^{-4t}x_1^2x_3^2+x_1^2y_3^2+x_3^2y_1^2+e^{4t}y_1^2y_3^2}}
\end{align*}
Taking the limit of the result as $t\rightarrow \infty$ gives $1$, implying $\theta_1 \rightarrow 0$ as $t \rightarrow \infty$.

We now consider the case when $y_1 = 0$.  Then $\cos(\theta_1)$ is given by
\begin{align*}
  \cos(\theta_1) = \frac{g_{-t}\overrightarrow{P_3}\cdot g_{-t}\overrightarrow{P_1}}{||g_{-t}\overrightarrow{P_3}||_2 ||g_{-t}\overrightarrow{P_1}||_2} &= 
    \frac{e^{-2t}x_1x_3}{e^{-t}x_1\sqrt{e^{-2t}x_3^2+e^{2t}y_3^2}} \\
    &= \frac{e^{-t}x_3}{\sqrt{e^{-2t}(x_3^2+e^{4t}y_3^2)}} \\
    &= \frac{x_3}{\sqrt{x_3^2+e^{4t}y_3^2}}
\end{align*}
But taking the limit of the result as $t\rightarrow \infty$ gives 0, which implies $\theta_1$ approaches $\pi/2$ under $g_{-t}$-flow, since $\cos(\theta_1) \rightarrow 0$ as $t\rightarrow \infty$.  Similar computation shows the angle $\theta_2$ between vectors $\overrightarrow{P_1-P_2}$ and $\overrightarrow{P_3-P_2}$ approaches $0$ as the hinge $H$ is transformed under $g_{-t}$-flow and $y_1=0$.  Then the dihedral angle $\alpha(e) \rightarrow 3\pi/2$ as $t\rightarrow \infty$, implying the hinge $H$ is not eternally Delaunay when $y_1 = 0$.  The case of $y_1 = y_2$ is a similar computation.  
\end{proof}

\begin{figure}
    \centering
    \includegraphics[scale=.2]{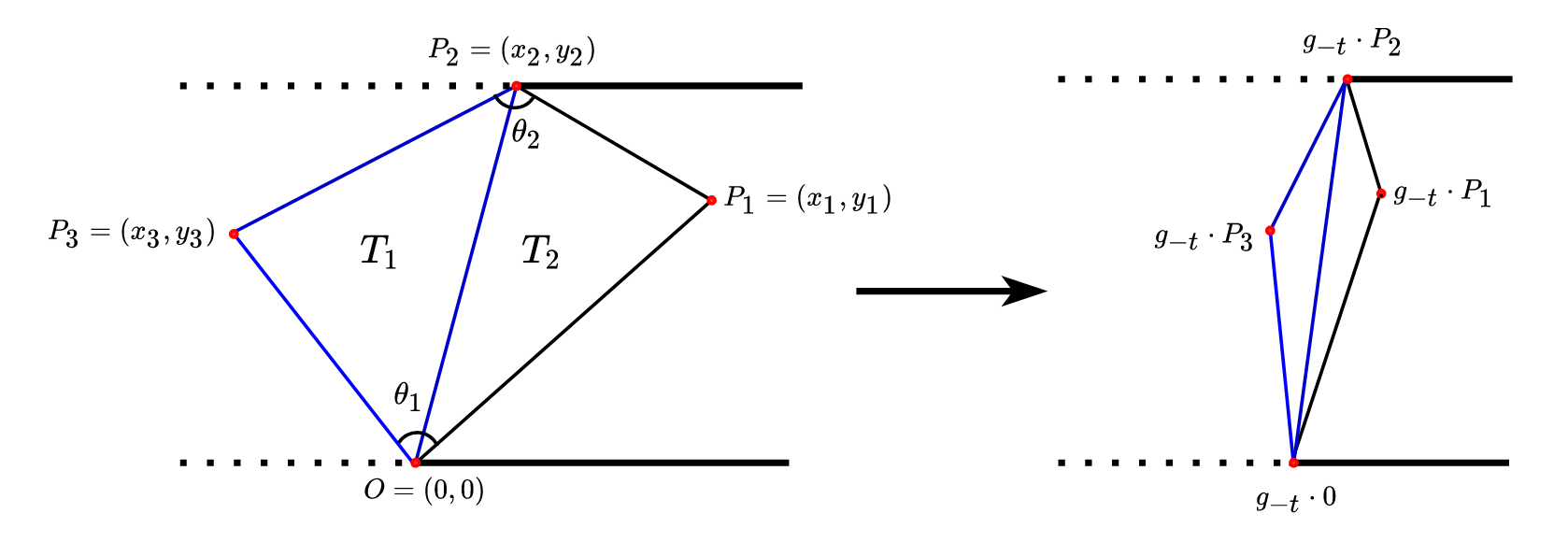}
    \caption{A hinge $H$ which is not eternally Delaunay.}
    \label{fig:nonED}
\end{figure}

We are now in a position to prove Proposition \ref{cylref}.  

\begin{proof}[Proof of Proposition \ref{cylref}]
Any Veech surface $(X, \omega)$ periodic in direction $v$ can be made horizontally periodic by a rotation.  Recall that IDRs divide the fundamental domain into finitely many polygons.
As a consequence, the fundamental domain of this rotated surface will have an IDR with a cusp at infinity, and hence the surface has a triangulation associated to a noncompact IDR.
As such, upon rotation and application of $g_{-t}$ flow for sufficiently large $t$, we may obtain a surface with a Delaunay triangulation associated with such a noncompact IDR, where it follows that every hinge in such a triangulation is eternally Delaunay.
It then suffices to show that any triangulation $\tau$ associated with such a non-compact IDR satisfies the property that every triangle $T_i \in \tau$ has a horizontal edge.

Let $\tau$ be a triangulation associated with a non-compact IDR. 
We aim to show every triangle $T_i \in \tau$ has a horizontal edge.
Proceeding by contradiction, assume there exists a triangle $T_1 \in \tau$ with no horizontal edge, yet every hinge in the triangulation is eternally Delaunay since the triangulation is associated with a non-compact IDR.
Let points $O, P_1, P_2, P_3$ be as in Lemma \ref{lem:band} and Figure \ref{fig:nonED}.
Let $T_1 = \bigtriangleup OP_2P_3$ so that without loss of generality the coordinates $O = (0, 0)$, $P_2 = (x_2, y_2)$, $P_3 = (x_3, y_3)$ satisfy $0 < y_3 < y_2$.

Let $e_1$ denote the edge $\overline{OP_2}$ of $T_1$, and let $T_2 = \bigtriangleup OP_1P_2$ denote the triangle joined to $T_1$ across edge $e_1$, as in Figure \ref{fig:nonED}.
By Lemma \ref{lem:band}, if $0 \leq y_1 \leq y_2$, then the hinge $H$ generated by $T_1$ and $T_2$ would not eternally Delaunay, and could not be in the triangulation.
Hence, it is necessary that $y_1 \in (y_2, \infty) \cup (0, -\infty)$.
Without loss of generality, let $y_1 \in (y_2, \infty)$.
Then it follows that the edge $e_2 = \overline{OP_1}$ of triangle $T_2$ is such that the $y$-coordinate of $e_2$ is strictly less than the $y$-coordinate of $e_1$.

Repeating the process inductively, for $i \ge 1$, we obtain a sequence of triangles $\{T_i\}$ with non-horizontal edges $\{e_i\}$ whose $y$-coordinates strictly decrease.  But any triangulation of $(X, \omega)$ has a finite number of edges, so the $y$-coordinates cannot strictly increase forever, implying existence of a hinge that is not eternally Delaunay by Lemma \ref{lem:band}, a contradiction.  We conclude that every triangle in $\tau$ has a horizontal edge, which completes the proof.
\end{proof}

\subsection{Cylinder Finding Algorithm}\label{sec:cyl-find-alg}

We now describe an algorithm that takes a Veech surface $(X, \omega)$ and a periodic direction of the surface, and returns a triangulation that comes from a Delaunay triangulation, which forms a refinement of the cylinder decomposition of the surface in the specified periodic direction.

\begin{definition}
Let $H$ be a hinge quadrilateral defined by two adjacent triangles $T_1$ and $T_2$ in a triangulation.  A \emph{hinge flip} is the operation by which $H$ is replaced by the new hinge $H'$ formed by the other diagonal in the quadrilateral. See \autoref{fig:hinges} for an example.
\end{definition}

As remarked in Section 2.3, if a hinge $H$ is not locally Delaunay, the flipped hinge $H'$ is locally Delaunay.
Furthermore, if a triangulation is not Delaunay, it can be made Delaunay by flipping a finite number of hinges.
To this end, we can make any triangulation of a Veech surface Delaunay via a finite sequence of hinge flips; the associated IDR to this triangulation will be of particular interest in the sequel. 

The following algorithm computes a cylinder refinement of a Veech surface.
\begin{enumerate}
    \item Begin with a Delaunay triangulation $\tau$ of a Veech surface $(X, \omega)$, and a cylinder direction $v \in S^1$. 
    \item If $v = (0, 1 )^T$, then apply a $\pi/2$-rotation to $\tau$.
    Otherwise, if $v = (x, y)^T$, apply the shear
    \[
    M = \begin{pmatrix}
    1 & 0 \\
    -\frac{y}{x} & 1
    \end{pmatrix}
    \]
    to the triangulation $\tau$, transforming to a horizontal direction.  Let $\tau'$ denote the resulting normalized triangulation.
    \item Apply the $g_{-t}$ flow matrix to $\tau'$, keeping the triangulation Delaunay by hinge flips, until every triangle in the triangulation has an edge parallel to $(1, 0)^T$.  This process must terminate because there exists finite time $t$ such that $g_{-t}\cdot \tau'$ corresponds to a noncompact IDR after making the triangulation Delaunay, and by the proof of \autoref{cylref} every triangle in such a triangulation must have a horizontal edge. Denote the resulting triangulation $\overline{\tau}$.
    \item Assemble the collection of triangles in $\overline{\tau}$ into cylinders by developing across non-horizontal edges.  Such a refinement of cylinders by triangles in $\overline{\tau}$ is guaranteed by the proof of \autoref{cor:refinement}.
    \item Apply inverse $g_{t}$-flow to $\overline{\tau}$ for the same time used in step (3), and then apply the inverse shear or $\pi/2$ rotation matrix from step (2) to $g_t \cdot \overline{\tau}$.
\end{enumerate}
The result is then a triangulation of $(X, \omega)$ that comes from a Delaunay triangulation, so that the triangulation gives a refinement of the cylinders composing the cylinder decomposition of the surface in direction $v$.

\section{Experimental results} \label{secExp}

In this section we apply our algorithm to different Veech surfaces to compute their periodic points.

\subsection{Eigenforms in \texorpdfstring{$\mathcal{H}(2)$}{}}

\begin{figure}[b]
    \centering
    \includegraphics[scale=.15]{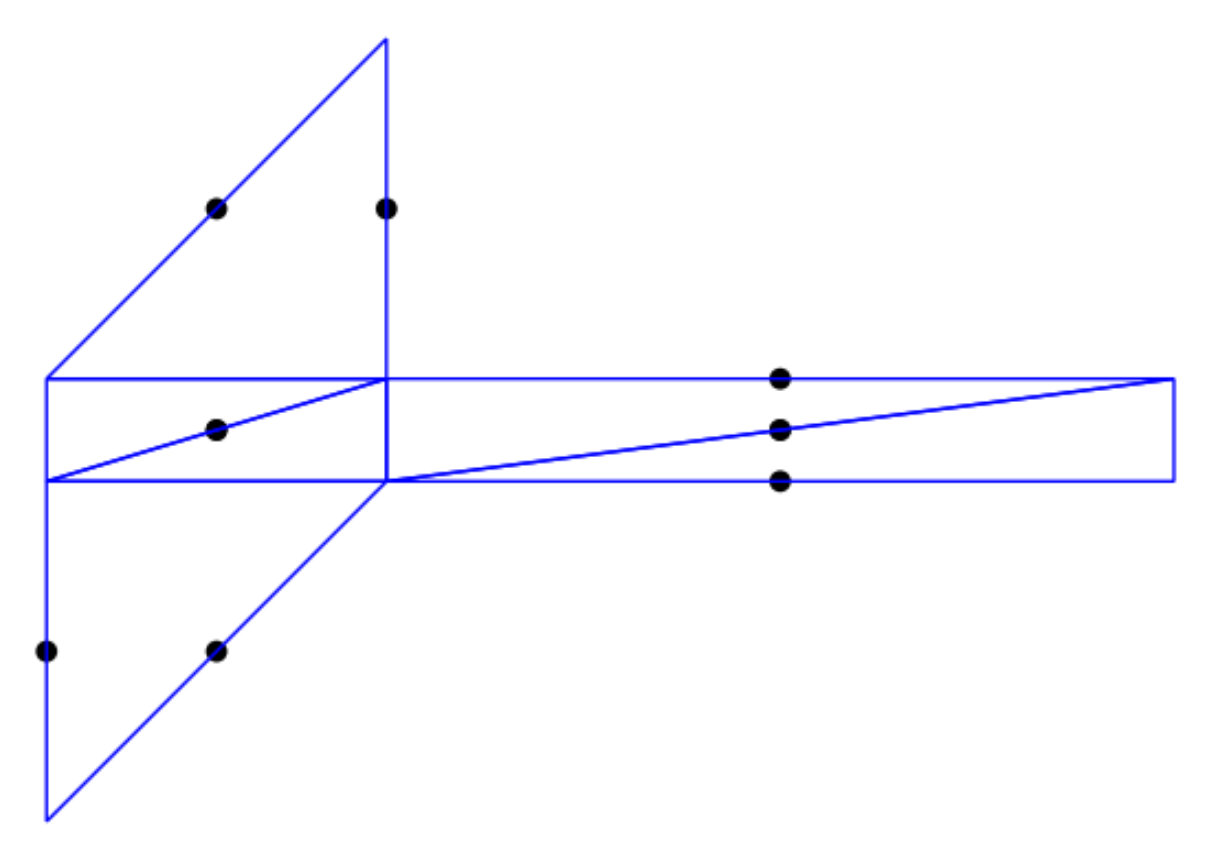}
    \caption{The dots are the computed periodic points on L table with parameter $D = 44$.  Note that the same point may appear twice as opposite parallel sides are identified.}
    \label{figH2pts}
\end{figure}

We begin by applying our algorithm to the $\mathcal{H}(2)$ eigenforms, constructed by McMullen \cite{McMullenEigen} and Calta \cite{Calta}.
These genus 2 translation surfaces arise from certain $L$-shaped polygons, and they are classified by an integer discriminant $D$ congruent to 0 or 1 modulo 4, as well as a spin invariant $\epsilon \in \{-1, 1\}$.
M\"oller showed that the periodic points for these surfaces coincide with the fixed points of their hyperelliptic involutions \cite{Moller2006}.
We tested our algorithm on discriminants through  $D=44$ and returned the correct periodic points. Figure \ref{figH2pts} provides a visual example of output from the algorithm.

\subsection{Prym eigenforms in genus 3}

Each integer $D \geq 8$ satisfying $D \equiv 0, 1$ or $4$ $\mod 8$ determines two S-shaped Euclidean polygons as in \autoref{prympoly}.
Gluing parallel identified sides as shown in the figure gives two genus three translation surfaces $(X^+_D, \omega_D)$ and $(X^-_D, \omega^-_D)$.
The surfaces are known as the \textit{A+ and A- models} of the Weierstrass Prym eigenforms in genus three, and they are one of the known infinite families of Veech surfaces.
We don't give their full description here, but we remark that their underlying Riemann surfaces admit holomorphic involutions known as their \textit{Prym involution}; the Prym involution has 3 fixed points that are not zeros of the 1-forms.
For a complete description, see \cite{LanneauNguyen}.

\begin{figure}[t]
    \centering
    \includegraphics[scale=.3]{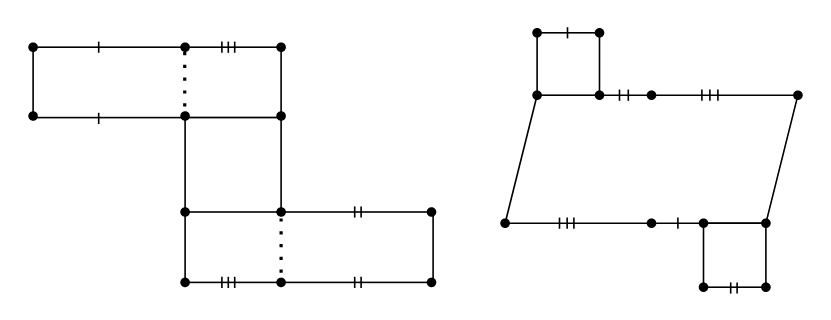}
    \caption{Example form of Model $A+$ surface left, and Model $A-$ surface right.}
    \label{prympoly}
\end{figure}

We now consider the periodic points of genus 3 Prym eigenforms.
For general reasons the fixed points of the Prym involution are periodic, but a priori there could be other periodic points.
We ran our algorithm on the surfaces through discriminant $D=104$ and showed this is not the case, obtaining \autoref{thm:prym}.
This experimental evidence was the motivation and starting input for a full classification by the third author in \cite{Free} that shows that the Prym fixed points are in fact all the periodic points.

\subsection{Example}

We walk through the application of our algorithm to $(X_{17}^+, \omega_{17}^+)$.
We begin by computing the generators of the Veech group of the surface using an implementation of Bowman's algorithm \cite{bowman}.
We find that the generators of $\SL(X^+_{17}, \omega_{17}^+)$ are
\[
\begin{pmatrix}
1 & 2 \\ 0 & 1
\end{pmatrix}, 
\begin{pmatrix}
-1 & 2 \\ \frac{-\sqrt{17}}{4} - \frac{5}{4} & \frac{\sqrt{17}}{2}+\frac{3}{2}
\end{pmatrix},
\begin{pmatrix}
\frac{\sqrt{17}}{2}+\frac{3}{2} & -2 \\ \frac{\sqrt{17}}{4} + \frac{5}{4} & -1
\end{pmatrix},
\begin{pmatrix}
\frac{-3\sqrt{17}}{2}-\frac{13}{2} & \frac{3\sqrt{17}}{2} +\frac{9}{2} \\ \frac{-7\sqrt{17}}{4} - \frac{27}{4} & \frac{3\sqrt{17}}{2}+\frac{11}{2}
\end{pmatrix}
\]
We then determine the constraint lines associated with every triangle in a triangulation of the surface, as detailed in Sections \ref{sec:constraint_reduction} and \ref{sec:finite-set-of-lines}.  In this case, the constraint lines associated with every triangle lie strictly on the boundaries of each triangle.

\begin{figure}
    \centering
    \includegraphics[scale=.15]{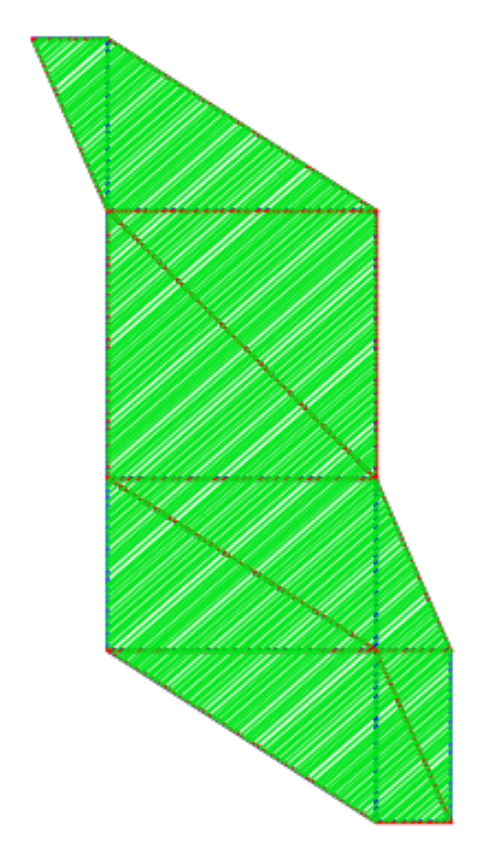}
    \caption{Model $A+$ surface, $D=17$.  The angled lines overlapping the figure are the transformed constraint lines obtained after applying the Veech element $M$.  All the intersection points are candidate periodic points.}
    \label{prymconst}
\end{figure}

We then apply each generator of the Veech group to each constraint line to produce a new set of constraint lines (see in Section \ref{sec:seg_to_points}).
Applying the Veech group element 
\[
M = \begin{pmatrix}
\frac{\sqrt{17}}{2}+\frac{3}{2} & -2 \\ \frac{\sqrt{17}}{4} + \frac{5}{4} & -1
\end{pmatrix}
\begin{pmatrix}
\frac{-3\sqrt{17}}{2}-\frac{13}{2} & \frac{3\sqrt{17}}{2} +\frac{9}{2} \\ \frac{-7\sqrt{17}}{4} - \frac{27}{4} & \frac{3\sqrt{17}}{2}+\frac{11}{2}
\end{pmatrix}
=
\begin{pmatrix}
-2\sqrt{17}-9 & \frac{3\sqrt{17}}{2} + \frac{17}{2} \\ \frac{-7\sqrt{17}}{4} - \frac{31}{4} & \frac{3\sqrt{17}}{2}+\frac{13}{2}
\end{pmatrix}
\]
to the constraint lines produces the transformed constraint lines as pictured in Figure \ref{prymconst}.

\begin{figure}
    \centering
    \begin{minipage}[t]{0.5\textwidth}
        \centering
        \includegraphics[scale=.15]{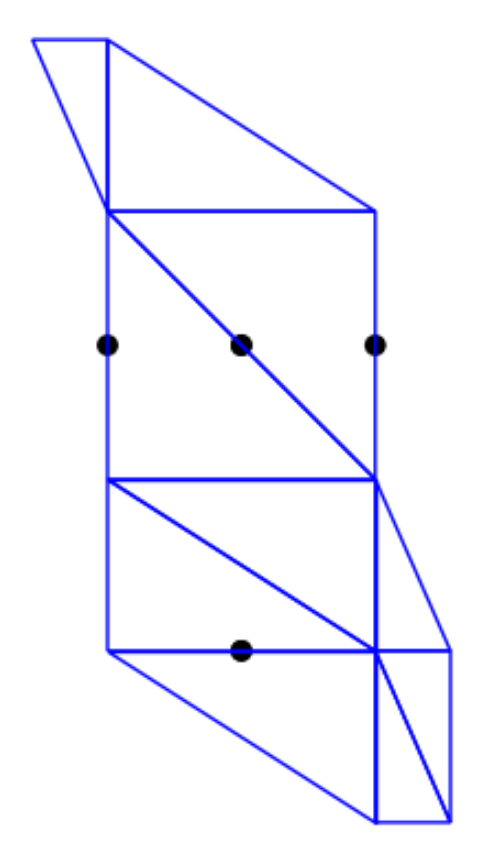}
        \caption{Model $A+$ surface, $D=17$.  The two points on the opposite vertical edges are identified.}
        \label{prym1}
    \end{minipage}\hfill
    \begin{minipage}[t]{0.5\textwidth}
        \centering
        \includegraphics[scale=.15]{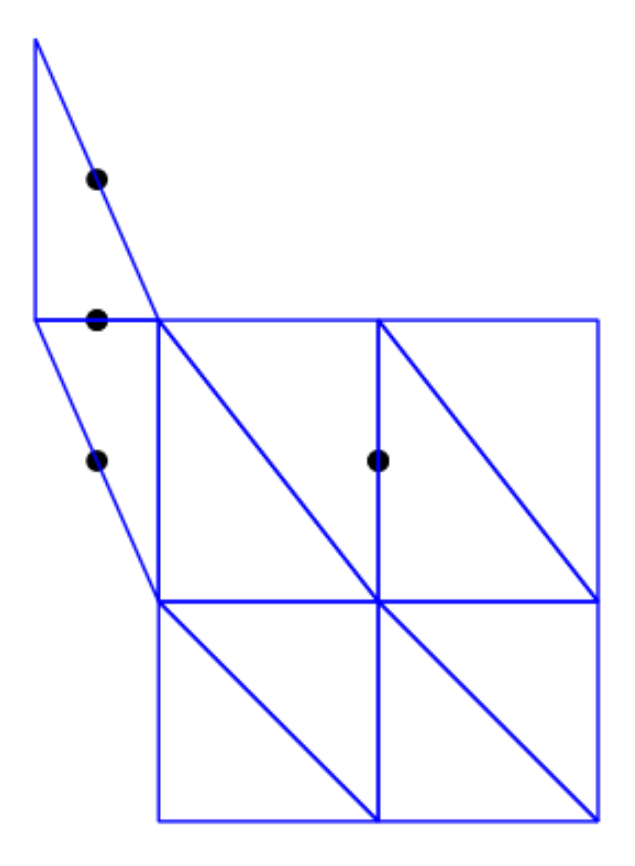}
        \caption{Model $A-$ surface, $D=17$.  The points on opposite edges are identified.}
        \label{prym2}
    \end{minipage}
\end{figure}
After transforming the constraint lines with $M$, we compute the intersection points of the two sets of constraints to obtain a set of 857 candidate periodic points.
Finally, we determine which of the candidate points are periodic by applying the algorithm described in \autoref{lem:reduce-candidate-points} on the set of candidate points.
In this example, after applying the generator
\[
\begin{pmatrix}
\frac{\sqrt{17}}{2}+\frac{3}{2} & -2 \\ \frac{\sqrt{17}}{4} + \frac{5}{4} & -1
\end{pmatrix}
\]
to the set of 857 candidate periodic points, the only points remaining were the three fixed points of the Prym involution points.
The end result is pictured in Figure \ref{prym1}.
Performing a similar analysis for the Model $A-$ surface for $D=17$, we see in Figure \ref{prym2} that the periodic points for this surface are again only the fixed points of the Prym involution.

Our implementation of this algorithm is publicly available at \cite{code}.

\section*{Acknowledgements} We thank the Institute for Computational and Experimental Research in Mathematics (ICERM) for running the Summer@ICERM 2021 REU where this work took place. We also thank Curt McMullen for comments and suggestions on an early draft of this paper. Finally, we are grateful to Paul Apisa for proposing the problem and providing invaluable guidance and support.

\section*{Declarations}
\begin{enumerate}
    \item \textbf{Funding and Competing Interests:} This work took place at the 2021 Summer@ICERM REU, and the authors were funded by ICERM.  But ICERM will neither gain nor lose money upon publication of the manuscript.
    \item \textbf{Disclosure statement:} The authors have no conflicts of interests or competing interests.  The authors have no financial or proprietary interests in any material discussed in this article.  The authors have no competing interests to declare that are relevant to the content of this article.
    \item \textbf{Data availability statement:} The datasets generated during and/or analysed during the current study are available from the corresponding author on reasonable request.
\end{enumerate}

\bibliography{references}
\bibliographystyle{abbrv}

\end{document}